\documentclass[12pt]{article}

\usepackage{amsmath,amsthm,amsfonts,amssymb, color,xcolor,subcaption,graphicx} 

\usepackage[normalem]{ulem}
\usepackage{todonotes, dsfont}
\usepackage{pgfplots}
\usepackage[hidelinks]{hyperref}
\numberwithin{equation}{section}

\newtheorem{theorem}{Theorem}[section]

\newtheorem{proposition}[theorem]{Proposition}
\newtheorem{lemma}[theorem]{Lemma}

\newtheorem{remark}[theorem]{Remark}

\def\cA{\mathcal{A}}
\def\cB{\mathcal{B}}

\def\cF{\mathcal{F}}
\def\cG{\mathcal{G}}
\def\cH{\mathcal{H}}
\def\cI{\mathcal{I}}

\def\cN{\mathcal{N}}

\def\cR{\mathcal{R}}

\def\cS{\mathcal{S}}

\def\bD{\mathbb{D}}
\def\bE{\mathbb{E}}

\def\bP{\mathbb{P}}
\def\bR{\mathbb{R}}

\newcommand{\ud}{\ensuremath{\mathrm{d}}}

\begin{document}

\title{A functional central limit theorem for the KPZ
  equation with narrow wedge initial data}

\author{Raluca M. Balan\footnote{Corresponding author. University of Ottawa, Department of Mathematics
  and Statistics, 150 Louis Pasteur Private, Ottawa, Ontario, K1N 6N5, Canada.
  E-mail address: rbalan@uottawa.ca.} \and Le Chen \footnote{Auburn University,
  Department of Mathematics and Statistics, Office 3147, 305 W. Samford Avenue,
  Auburn, Alabama 36849, USA. E-mail address: le.chen@auburn.edu} \and Jinxin
  Wang\footnote{University of Ottawa, Department of Mathematics and Statistics,
  150 Louis Pasteur Private, Ottawa, Ontario, K1N 6N5, Canada. E-mail address:
  jwang023@uottawa.ca.} }


\maketitle

\begin{abstract}
  \noindent {In this article, we consider the Kardar-Parisi-Zhang (KPZ) equation with narrow wedge initial data. We prove a functional central limit theorem for the spatial average of the solution, which solves an open problem from \cite{gu-pu}. Our main tool is Clark-Ocone formula from Malliavin calculus, combined with an in-depth analysis of the increments of the solution. The main step is a moment bound for time increments of the normalized spatial integral. For this, we introduce a new technique, called a ``common-ray'' change of variables. Using the fact that Malliavin derivative of the solution satisfies a Chapman-Kolmogorov relation,  we reduce the kernel differences at two
  times to spatial increments at one common time.}
\end{abstract}

\noindent {\em MSC 2020:} Primary 60H15; Secondary 60F05, 60H07, 35R60

\vspace{1mm}

\noindent {\em Keywords:} Kardar-Parisi-Zhang equation, narrow wedge
initial data, functional central limit theorem, tightness, stochastic heat
equation, Malliavin calculus



\section{Introduction}

Spatial averages of stochastic partial differential equations (SPDEs)
{are known to exhibit} macroscopic Gaussian fluctuations, {i.e. they satisfy a central limit theorem.} For the
Kardar-Parisi-Zhang (KPZ) equation with narrow wedge initial data,
finite-dimensional convergence is known {from \cite{gu-pu}}, but temporal increments require
separate control to obtain convergence of paths. {The corresponding functional limit theorem has been left open in \cite{gu-pu}. The goal of this article is to} prove the needed tightness
on compact time intervals bounded away from zero.

A starting point for central limit theorems {for spatial averages} is the work
of \cite{HNV20} on the stochastic heat equation (SHE) with flat initial
condition $u(0,x) = 1$ and a Lipschitz noise coefficient $\sigma$. In the linear
case $\sigma(u) = u$, this is the {\em parabolic Anderson model} (PAM):
\begin{equation}\label{PAM}
  \partial_t u(t,x)=\frac{1}{2} \partial_x^2 u(t,x)+u(t,x) \xi(t,x), \quad t>0,x\in \bR, \quad u(0,\cdot)=1.
\end{equation}
The mild solution of \eqref{PAM} is a jointly measurable,
$(\cF_t)$-adapted random field satisfying
\[
  u(t,x)=1+\int_0^t \int_{\bR} p_{t-s}(x-y)u(s,y)\xi(\ud s,\ud y),
\]
where the integral is interpreted in the It\^o sense, $(\cF_t)_{t\ge 0}$ is the
filtration induced by $\xi$ (given by \eqref{def-Ft} below), and $p_t(x)$ is the
heat kernel:
\[
  p_{t}(x)=\frac{1}{\sqrt{2\pi t}} \exp\left(-\frac{|x|^2}{2t}\right).
\]

The authors of \cite{HNV20} studied the asymptotic behaviour as $N \to \infty$
of the {\em spatial average}:
\[
  F_N(t)=\int_0^N \big(u(t,x)-1\big)\ud x.
\]
Their study was motivated by the fact that, after proper normalization, $F_N(t)$
is asymptotically equivalent to $F_N^*(t)$, the spatial average of the solution
$u_N^{*}(t,x)$ of a localized version of the original equation, which has the
property that  $\{u_N^{*}(t,x_i)\}_{i \ge 1}$ are independent if the $x_i$'s
are far from each other.
The precise meaning of this localization was given in \cite{CJK13}. By
writing the interval $[0, N]$ as a union of alternating big and small blocks, we
obtain enough spacing between the big blocks to prove a central limit theorem
(CLT). The small blocks become asymptotically negligible. The normalization is
given by the variance of $F_N(t)$. Since $\{u(t,x)\}_{x \in \bR}$ is strictly
stationary, $\rho_t^{u}(x):={\rm Cov}(u(t,x),u(t,0)) = \int_0^t p_{2(t-s)}(x)
\bE|u(s,0)|^2 \ud s$ (by It\^o's isometry), and
\[
  {\rm Var}(F_N(t))=\int_0^N \int_0^N \rho_t^{u}(x-y)\ud x \ud y=2\int_0^N
    \rho_t^{u}(x)(N-x)\ud x \sim N K_t,
\]
as $N \to \infty$, using the fact that $K_t:=\int_{\bR}\rho_t^u(x)\ud x = \int_0^t \bE|u(s,0)|^2 \ud s < \infty$ since $\bE|u(t,0)|^2
= 2 e^{t/4}\Phi(\sqrt{t/2})$, where $\Phi$ is the standard normal c.d.f. The
novelty is that this CLT is proved in \cite{HNV20} using a combination between
Stein method and Malliavin calculus, which provides a rate for the convergence
in the total variation distance, called a {\em quantitative} CLT. Moreover, the
authors of \cite{HNV20} proved a corresponding functional CLT:
\[
  \frac{1}{\sqrt{N}}F_N(\cdot) \stackrel{d}{\to} \cG(\cdot) \quad \mbox{in} \quad C[0,T],
\]
where $\cG$ is a centered Gaussian process with covariance $\bE[\cG(t)\cG(s)] =
\int_0^{t\wedge s} \bE|u(r,0)|^2 \ud r$. Here $C[a, b]$ is the space
of continuous functions $f:[a, b] \to \bR$ equipped with the topology of uniform
convergence.

\medskip

Article \cite{HNV20} sparked the development of a large area dedicated to
similar QCLTs and FCLTs for a variety of equations and noises. We refer the
reader to \cite{BHWXY, BNQZ, BS26, BY24, BZ24, BNZ, DNZ, ebina23, ebina25,
gu-li20, li-zhang23, NSZ20, NXZ22, NZ20, NZ21} for a sample of references. The
intuition behind these results is the same: the noise may be correlated in
space, but as long as the correlation between the big blocks decays fast enough,
the resulting variables remain weakly dependent. Moreover, the same intuition
applies to a {\em function} $g$ of the solution. In \cite{CKNP22-AIHP}, the
authors proved that
\[
  \frac{1}{N^{d/2}} {\int_{[0,N]^d}} \Big( g(u(t,x)) -\bE[g(u(t,x))]\Big)\ud x
    \stackrel{d}{\longrightarrow} N(0,c_t)
\]
where $u$ is the solution of (SHE) with flat initial condition, driven by a
spatially homogeneous Gaussian noise (with integrable covariance kernel), and
$g$ is a Lipschitz function. This result was extended in \cite{CKNP23-SPDE}
to locally Lipschitz functions $g$, such as $g(u) = \log u$. This
case is particularly important since, by the Cole-Hopf transform,
$h^{\rm flat} = \log u$ is the solution of the {\em Kardar-Parisi-Zhang (KPZ)
equation} introduced in \cite{KPZ}.

The KPZ equation is formally written as:
\begin{equation}\label{KPZ}
  \partial_t h(t,x)=\frac{1}{2} \partial_x^2 h(t,x)+ \frac{1}{2} \big(\partial_x
    h(t,x)\big)^2+\xi(t,x), \quad t>0,x\in \bR,
\end{equation}
and models the growth of a randomly evolving surface, for instance a surface
formed by particles landing and accumulating at random. Its solution $h(t,x)$
represents the height of the surface at time $t$ and position $x$. The diffusion
smooths the surface, the nonlinear term makes the growth depend on the local
slope, and the noise introduces random irregularities. Much of the interest in
the KPZ equation comes from {\em universality}: many systems with very different
microscopic rules are expected to show the same large-scale fluctuation
behaviour. KPZ also reveals connections between random growth, interacting
particle systems, directed polymers, and random matrix theory. But defining the
solution rigorously is challenging: the solution is too rough for
$(\partial_xh)^2$ to make sense, so this term requires re-normalization;
see \cite{hairer13}.

In the present article, we consider the Cole-Hopf transform
\begin{equation}\label{def-h}
  h(t,x)=\log Z(t,x)
\end{equation} of the solution of (PAM) with Dirac delta initial condition:
\begin{equation}\label{SHE}
  \partial_t Z (t,x)=\frac{1}{2}\partial_{x}^2 Z (t,x)+Z(t,x)\xi(t,x), \quad
  t>0,\ x\in\bR \quad \quad \quad Z(0,\cdot)=\delta_0.
\end{equation}

We say that $h = h^{nw}$ is the solution of the KPZ \eqref{KPZ} with {\em narrow
wedge initial data}, interpreted by approximation (since we cannot take the
logarithm of $\delta_0$): if $Z_{\varepsilon}(0,x) = \frac{1}{2 \varepsilon}
\exp(-\frac{|x|}{\varepsilon})$, then $h(0,\cdot) = \lim_{\varepsilon \to 0+}
h_{\varepsilon}(0,\cdot)$, where
\[
  h_{\varepsilon}(0,x)=\log Z_{\varepsilon}(0,x)=-\log(2\varepsilon)-\frac{|x|}{\varepsilon}.
\]
Figure \ref{narrow-wedge-pic} shows some profiles of $h_{\varepsilon}$ which
justify the name: as $\varepsilon$ decreases, the wedge becomes narrowed and its
peak rises.

\begin{figure}[htbp]
  \centering
  \begin{tikzpicture}
    \begin{axis}[
      width=0.58\textwidth,
      height=0.36\textwidth,
      xmin=-1, xmax=1,
      ymin=-4.5, ymax=1.4,
      xtick={-1,-0.5,0,0.5,1},
      ytick={-4,-3,-2,-1,0,1},
      xlabel={$x$}, ylabel={$h_\varepsilon(0,x)$},
      label style={font=\small, text=black},
      tick label style={font=\scriptsize, text=black},
      axis line style={black},
      tick style={black},
      grid=major,
      grid style={black!15},
      legend style={
        at={(0.025,0.025)}, anchor=south west,
        font=\scriptsize, text=black,
        fill=white, draw=black!30,
        cells={anchor=west}
      }
    ]
      \addplot[black, thick, domain=-1:1, samples=3]
        {-ln(2)-abs(x)};
      \addlegendentry{$\varepsilon = 1$}
      \addplot[black, thick, dashed, domain=-1:1, samples=3]
        {-2*abs(x)};
      \addlegendentry{$\varepsilon = 0.5$}
      \addplot[black, thick, densely dotted, domain=-1:1, samples=3]
        {-ln(0.4)-5*abs(x)};
      \addlegendentry{$\varepsilon = 0.2$}
    \end{axis}
  \end{tikzpicture}
  \caption{Narrow wedge profiles $h_\varepsilon(0,x) =
    -\log(2\varepsilon)-|x|/\varepsilon$ for $\varepsilon = 1, \,0.5, \,0.2$.}
  \label{narrow-wedge-pic}
\end{figure}

\pagebreak Following Walsh \cite{walsh86}, we interpret the solution
of \eqref{SHE} in the mild sense, i.e. we assume that $\{Z(t,x);t \ge 0, x \in
\bR\}$ is a measurable and $(\cF_t)$-adapted random field which satisfies:
\begin{equation}\label{eq-Z}
  Z(t,x)=p_t(x)+\int_0^t \int_{\bR} p_{t-s}(x-y) Z(s,y) \xi(\ud s,\ud y),
\end{equation}
where the stochastic integral is interpreted in the It\^o sense. The
normalization below removes the deterministic spatial profile and allows us to
use stationarity. Indeed, the process
\[
  \overline{Z}(t,x)=\frac{Z(t,x)}{p_t(x)}, \quad x \in \bR
\]
is strictly stationary (see Proposition 1.4 of \cite{ACQ11}). Moreover,
$\overline{Z}$ satisfies:
\[
  \overline{Z}(t,x)=1+\int_0^t \int_{\bR} p_{\frac{s(t-s)}{t}}\left( y-\frac{s}{t}x\right)
    \overline{Z}(s,y)\xi(\ud s,\ud y),
\]
using property \eqref{def-q} below of the heat kernel. In \cite{CKNP22-JFA}, the
authors proved that its spatial average $\overline{S}_N(t) = \int_0^N
\big(\overline{Z}(t,x)-1\big)\ud x$ satisfies the following FCLT:
\[
  \frac{1}{\sqrt{N \log N}} \overline{S}_N(\cdot) \stackrel{d}{\longrightarrow} \sqrt{2}B(\cdot)
    \quad \mbox{in} \quad C[0,T],
\]
where $B$ is a standard Brownian motion. The normalization comes from the fact
that ${\rm Var}\big(\overline{S}_N(t) \big) \sim 2 t N \log N$, according to
Proposition 4.1 of \cite{CKNP22-JFA}. Alternatively, this follows from
stationarity and the fact that
\begin{equation}\label{E:rho}
  \rho_t^{\overline{Z}}(x):={\rm Cov}(\overline{Z}(t,x),\overline{Z}(t,0)) =\int_0^t
    \bE|\overline{Z}(s,0)|^2 \, p_{\frac{2s(t-s)}{t}}\Big(\frac{s}{t} x\Big) \ud s \sim
    \frac{t}{x} \quad \mbox{as} \quad x\to \infty,
\end{equation}
where the {second-moment formula (2.31) of \cite{chen-dalang15}
  (with $\nu = \lambda = 1$ in their notation)} gives
\[
  m(s):=\bE|\overline{Z}(s,0)|^2
  =1+\sqrt{\pi s}\,e^{s/4}\Phi(\sqrt{s/2}).
\]
In particular, $m(s) \ge 1$ and $m(s) \to 1$ as $s \downarrow 0$. The factor
$s/t$ in the kernel argument is essential: by Tonelli's theorem and the change
of variables $z = sx/t$,
\[
  \int_0^\infty\rho_t^{\overline{Z}}(x)\,\ud x
  =\frac{t}{2}\int_0^t\frac{m(s)}s\,\ud s=\infty.
\]
The covariance tail and variance asymptotic are {given by
Proposition \ref{P:normalized-SHE-covariance} below.}

\medskip

There is a natural connection between $\overline{Z}$ and $h$: since $\log
\overline{Z}(t,x) = h(t,x)-\log p_t(x)$,
\[
  h(t,x)-\bE[h(t,x)]=\log \overline{Z}(t,x)-\bE[\log \overline{Z}(t,x)], \quad x\in \bR
\]
is strictly stationary. It turns out that the spatial average of $h$:
\begin{equation}\label{def-SN}
  S_N(t)=\int_0^N \big(h(t,x)-\bE[h(t,x)]\big)\ud x,
\end{equation}
has a similar asymptotic behaviour to $\overline{S}_{N}(t)$. The recent preprint
\cite{gu-pu} proved that
\begin{equation}\label{var-asy}
  {\rm Cov}(h(t,x),h(t,0))\sim \frac{t}{x} \quad \mbox{as} \quad x \to \infty,
\end{equation}
and consequently, ${\rm Var}\big(S_N(t) \big) \sim 2t N \log N$ as $N \to
\infty$. This is very different from the case of the flat initial data: Theorem
1.1 of the recent article \cite{chen-jimenez26} showed that
\[
  \mathrm{Cov}\big(h^{\rm flat}(t,x),h^{\rm flat }(t,0)\big) \sim
  \frac{2\kappa(t)}{\sqrt{\pi}}\,t^{3/2}\,|x|^{-2}
  \exp\Big(-\frac{x^{2}}{4t}\Big),
  \quad \quad \mbox{as} \ x\to\infty.
\]
for a certain constant $\kappa(t) > 0$. Since this covariance is integrable,
${\rm Var}\big(S_N^{\rm flat}(t) \big) \sim C_t N$, where $S_N^{\rm flat}(t)$ is
defined as \eqref{def-SN} with $h$ replaced by $h^{\rm flat}$. In fact, due to
the earlier work of \cite{CKNP23-SPDE}, it was already known that $S_N^{\rm
flat}$ satisfies the functional CLT: (see Theorem 2.3(ii) {\em
ibid.}),
\[
  \frac{1}{\sqrt{N}}S_{N}^{\rm flat}(\cdot) \stackrel{d}{\to} \cG(\cdot) \quad \mbox{in} \quad C[0,T],
\]
where $\cG$ is a centered Gaussian process with 
\[
  \bE[\cG(t)\cG(s)]
  =\int_{\bR}\mathrm{Cov}\big(h^{\rm flat}(t,x),h^{\rm flat}(s,0)\big)\,\ud x.
\]

\medskip

In addition to \eqref{var-asy}, the authors of \cite{gu-pu} have also proved
that as $N \to \infty$,
\[
  \frac{1}{\sqrt{N \log N}} S_N(\cdot) \stackrel{f.d.d.}{\longrightarrow} \sqrt{2}B(\cdot),
\]
where $\stackrel{f.d.d.}{\longrightarrow}$ denotes convergence in the sense of
finite-dimensional distributions. They have left as an open problem the
extension of this result to a functional CLT; see Remark 4.3. {\em ibid}.
{The goal of the present article is to} establish this process-level convergence on every compact time
interval bounded away from zero.

\medskip

The following theorem is the main result of the present article.

\begin{theorem}\label{T:FCLT}
  For any $N \ge 2$, the process $S_N(\cdot) = \{S_N(t)\}_{t>0}$ has a
  locally $\gamma$-H\"older continuous modification of order $\gamma
  \in (0,\frac{1}{4})$, which we denote also by $S_N(\cdot)$. Moreover, as $N
  \to \infty$,
  \[
    \frac{1}{\sqrt{N \log N}} S_N(\cdot) \stackrel{d}{\longrightarrow} \sqrt{2}B(\cdot), \quad
      \mbox{in $C[\varepsilon,T]$},
  \]
  for any $0 < \varepsilon < T$, where $(B_t)_{t\ge 0}$ is a standard Brownian
  motion, and $\stackrel{d}{\longrightarrow}$ denotes convergence in
  distribution.
\end{theorem}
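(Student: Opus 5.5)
Since finite-dimensional convergence $\frac{1}{\sqrt{N\log N}}S_N(\cdot)\stackrel{f.d.d.}{\longrightarrow}\sqrt{2}B(\cdot)$ is already known from \cite{gu-pu}, the statement of Theorem \ref{T:FCLT} reduces to two facts: the existence of a H\"older modification for each fixed $N$, and tightness of $\{(N\log N)^{-1/2}S_N\}_{N\ge 2}$ in $C[\varepsilon,T]$. Both will follow from a single moment bound for time increments. The target is: for every $p\ge 2$ and $0<\varepsilon<T$ there exists $C=C_{p,\varepsilon,T}$ such that
\[
  \big\|S_N(t)-S_N(s)\big\|_p \le C\,\sqrt{N\log N}\;|t-s|^{1/4}, \qquad \varepsilon\le s<t\le T,\ N\ge 2 .
\]
Kolmogorov's continuity criterion, applied with $p$ large, then gives a locally $\gamma$-H\"older modification for every $\gamma<\frac14$. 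The Kolmogorov--Chentsov tightness criterion, using the same bound with $p>4$ so that $p/4>1$, gives tightness of the normalized processes. Combined with the f.d.d.\ convergence, this yields convergence in distribution in $C[\varepsilon,T]$.

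\textbf{Clark--Ocone representation.} To prove the moment bound, I write $h=\log Z$ and apply the Clark--Ocone formula:
\[
  S_N(t)=\int_0^t\!\int_{\bR}\Big(\int_0^N \bE\big[D_{r,z}h(t,x)\,\big|\,\cF_r\big]\ud x\Big)\xi(\ud r,\ud z),
  \qquad D_{r,z}h(t,x)=\frac{D_{r,z}Z(t,x)}{Z(t,x)} .
\]
Let $G_t(r,z)=\int_0^N \bE[D_{r,z}h(t,x)\,|\,\cF_r]\ud x$. The increment splits as
\[
  S_N(t)-S_N(s)=\int_s^t\!\int G_t\,\ud\xi+\int_0^s\!\int (G_t-G_s)\,\ud\xi .
\]
Burkholder's inequality followed by Minkowski's inequality bounds the $p$-th moment of each piece by $\int\!\int \|G\|_{p}^2\,\ud z\,\ud r$, raised to the power $1/2$.

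For the quantities $D_{r,z}Z(t,x)/Z(t,x)$, I will use the known representation $D_{r,z}Z(t,x)=Z(r,z)\,Z_{r,z}(t,x)$, where $Z_{r,z}$ is the solution started from $\delta_z$ at time $r$. Together with negative-moment bounds for $Z$, this gives $\|D_{r,z}h(t,x)\|_p\lesssim p_{r(t-r)/t}\big(z-\tfrac{r}{t}x\big)$, up to constants and mild corrections.

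\textbf{First piece.} Integrating over $x\in[0,N]$ and squaring, the first term is of order $\int_s^t N\,\ud r\sim N|t-s|$, which is harmless.

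\textbf{Second piece: the main obstacle.} The difficulty is the term $G_t-G_s$ for $r<s$. The two kernels are centered at the different points $\frac{r}{t}x$ and $\frac{r}{s}x$. For $x$ of size $N$ these centers differ by about $rN|t-s|$, so a naive comparison destroys the $\sqrt{N\log N}$ normalization. Here I would use the ``common-ray'' idea from the abstract. The Chapman--Kolmogorov relation
\[
  Z_{r,z}(t,x)=\int Z_{r,z}(s,y)\,Z_{s,y}(t,x)\,\ud y
\]
lets me write $D_{r,z}Z(t,x)$ through the solution at the common time $s$. After the change of variables $x\mapsto \frac{s}{t}x$ along rays through the origin (at time $0$, because of the narrow wedge), both terms become integrals over spatial points at time $s$. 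The difference $G_t-G_s$ then reduces to
\begin{itemize}
\item spatial increments of $D_{r,z}h(s,\cdot)$ over distances of order $\sqrt{t-s}$, coming from the heat kernel $Z_{s,y}(t,x)$ propagating on $[s,t]$;
\item a boundary error from rescaling the interval $[0,N]$ to $[0,\frac{s}{t}N]$, of size $N|t-s|/s$ in length.
\end{itemize}

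To finish, I would bound the spatial increments using H\"older-$\frac12$ estimates of the kernel, which give a factor $|t-s|^{1/4}$ in $L^2$ after integration. The logarithmic factor arises exactly as in the variance computation of Proposition \ref{P:normalized-SHE-covariance}, from $\int_0^s \frac{m(r)}{r}\ud r$ truncated at scale $1/N$. The boundary error contributes at most $N|t-s|$.

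I expect controlling the conditional expectation of the ratio $D Z/Z$, uniformly enough to carry out this common-ray comparison with $p$-th moments, to be the technically hardest step.
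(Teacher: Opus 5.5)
Your plan follows the paper's proof. It uses Clark--Ocone, the split into the piece over $[t_1,t_2]$ and the $[0,t_1]$ difference, and the common-ray substitution $x=tu$, which produces the boundary piece over $u\in[N/t_2,N/t_1]$. It then applies the Chapman--Kolmogorov identity for $\cR=Dh$ itself (using $\int_{\bR}\cR(t_2,x;t_1,w)\,\ud w=1$) to reduce the bulk to spatial increments of $\cR(t_1,\cdot\,;s,y)$ at distance $\sim\sqrt{t_2-t_1}$, with Gaussian localization around $su$ retained.

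Two minor points. First, the conditional expectation you flag as the hardest step is removed at once by conditional Jensen, since Chapman--Kolmogorov is applied to $\cR$ before conditioning. Second, the paper only proves $|t_2-t_1|^{\alpha/2}$ with $\alpha<\frac12$, because its H\"older bound for $\overline{\cG}$ is stated only for $\alpha<\frac12$. That already suffices for the theorem, so your exact exponent $\frac14$ would need a sharper spatial estimate than the paper provides.
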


We state Theorem \ref{T:FCLT} on $[\varepsilon, T]$ because our
increment estimates depend on $\varepsilon$ and are not uniform as $\varepsilon
\downarrow 0$. For the linear observable $\overline{Z}-1$, {Lemma 2.4 of \cite{CKNP22-JFA} and Proposition 6.1 {\em ibid.} combine} a quantitative small-time bound on its
normalized spatial average with increment estimates whose deterioration near
zero is explicitly controlled. Our observable is
$\log\overline{Z}-\bE[\log\overline{Z}]$. Pointwise moment control, including
negative moments of $\overline{Z}$, does not by itself provide the required
spatial-averaging gain. We do not establish the corresponding uniform small-time
spatial-average estimate and interpolation for this observable; our result
therefore leaves convergence on $[0, T]$ open. 

The limiting finite-dimensional distributions are already supplied
by \cite{gu-pu}. To prove Theorem \ref{T:FCLT}, we establish a moment bound for
time increments of the normalized spatial integral, uniformly in the averaging
length. The obstruction is that temporal increments of the normalized
narrow-wedge SHE are not stationary in space, so the estimates used for flat
initial data do not transfer directly; see {Remark 4.3 of \cite{gu-pu}}.

{
As in  \cite{gu-pu}, we will use  the Clark-Ocone formula to express $X_N(t)$ as a stochastic integral with respect to $\xi(\ud s,\ud y)$ of spatial $\ud x$ integral on $[0,N]$ of a product of the from $\cA(t,x;s,y)q_{s}^{t,x}(y)$; see relation \eqref{S-Aq} below. But unlike \cite{gu-pu}, we will {\em not} consider the projection of $X_N(t)$ on the first Winer chaos space (which is the same stochastic integral, but with the integrand replaced by its expectation). 

The main difficulty is to reduce the increment $\cA(t_2,x;s,y)q_{s}^{t_2,x}(y)-\cA(t_1,x;s,y)q_{s}^{t_1,x}(y)$ to an increment which contains the {\em same} variable $t_1$. For this, we will use a new idea, based on rewriting the two $dx$ integrals using the changes of variables $x=t_1 u$ and $x=t_2u$. We call this the ``common-ray procedure'', where $u$ is the ``ray variable''. See relation \eqref{com-ray}. }

The key ingredient is the fact that the Malliavin derivative $\cR(t,x;s,y)=D_{s,y}h(t,x)$ satisfies a Chapmam-Kolmogorov relation; see relation \eqref{CK-R}.  This
reduces its difference at two times to a spatial increment at one common time; see relation \eqref{diff-R}.
To obtain tightness, we estimate the resulting bulk and boundary terms separately.


\medskip

The remainder of this article is organized in two sections: in
Section \ref{section-prelim}, we introduce some preliminary material, while in
Section \ref{section-FCLT} we present the proof of Theorem \ref{T:FCLT}.
Appendix \ref{S:appendix} contains some auxiliary material which is
used in the paper.

\medskip

We end the introduction with a few words about the notation. For any $p \ge 1$,
we denote by $\|\cdot\|_p$ the norm in $L^p(\Omega)$. We denote by
$C_{a,b,\ldots}$ a generic positive constant which depends on parameters $a, b,
\ldots$ (listed as lower indices), which may be different in each of its
appearances.

\bigskip

\section{Preliminaries}
\label{section-prelim}

In this section, we include some preliminary material which will be used below.

We assume that $\xi$ is defined on a complete probability space
$(\Omega,\cF,\bP)$. We denote by $(\cF_t)_{t\ge 0}$ the (augmented) filtration
induced by $\xi$:
\begin{equation}\label{def-Ft}
  \cF_t=\sigma\{ \xi([0,s] \times A); s \in [0,t], A \in \cB_b(\bR) \} \vee \cN,
\end{equation}
where $\cB_b(\bR)$ is the class of bounded Borel sets in $\bR$, and $\cN$ are
$\bP$-negligible events.

\medskip

A random process $u = \{u(t,x);t \ge 0, x \in \bR\}$ is {\em $(\cF_t)$-adapted}
if $u(t,x)$ is $\cF_t$-measurable for any $t > 0$ and $x \in \bR$. The process
$u$ is {\em measurable} if the map $(\omega,t,x) \mapsto u(\omega,t,x)$ is $\cF
\otimes \cB(\bR_{+}) \otimes \cB(\bR)$-measurable.

\subsection{Malliavin calculus}

In this section, we include some basic facts on Malliavin calculus with respect
to the isonormal Gaussian process $\{\xi(\varphi)\}_{\varphi \in \cH}$ induced
by the space-time Gaussian white noise:
\[
  \bE[\xi(g)\xi(h)]=\langle g,h\rangle_{\cH} \quad \mbox{with} \quad \cH=L^2(\bR_{+} \times \bR).
\]

\medskip

$\bullet$ {\bf Malliavin derivative.} Let $\cS$ be the class of ``smooth''
random variables:
\begin{equation}\label{form-F}
  F=f(\xi(\varphi_1),\ldots, \xi(\varphi_n)),
\end{equation} where $f \in C_{b}^{\infty}(\bR^n)$, $\varphi_i \in \cH$,
$n \ge 1$, and $C_b^{\infty}(\bR^n)$ is the class of bounded
$C^{\infty}$-functions on $\bR^n$, whose partial derivatives of all orders are
bounded. The {\em Malliavin derivative} of $F$ of the form (\ref{form-F}) is the
$\cH$-valued random variable given by:
$$
  D_{t,x}F:=\sum_{i=1}^{n}\frac{\partial f}{\partial x_i}(\xi(\varphi_1),\ldots,
  \xi(\varphi_n))\varphi_i(t,x), \quad \mbox{for $t>0$ and $x\in \bR$}.
$$
We endow $\cS$ with the norm $\|F\|_{\bD^{1,2}} =
\big(\bE|F|^2+\bE\|DF\|_{\cH}^2\big)^{1/2}$. The operator $D$ can be extended
to the space $\bD^{1,2}$, the completion of $\cS$ with respect to $\|\cdot
\|_{\bD^{1,2}}$.

If $F \in \bD^{1,2}$ is $\cF_t$-measurable, then
\begin{equation}\label{Mal-zero}
  D_{r,z}F=0 \quad \mbox{for almost every $(r,z)\in(t,\infty)\times\bR$, a.s.}.
\end{equation}

\medskip

$\bullet$ {\bf It\^o integral.} The It\^o integral $\int_0^t \int_{\bR}
g(s,x)\xi(\ud s,\ud x), t \ge 0$ is a martingale defined
for any {predictable} process $g$ such that $\bE\int_0^t \int_{\bR}
g^2(s,x)\ud x\ud s < \infty$ for any $t > 0$. By
Burkholder-Davis-Gundy and Minkowski's inequalities, for any $p \ge 2$,
\begin{equation}\label{BDG}
  \left\| \int_0^T \int_{\bR} g(t,x)\xi(\ud t,\ud x) \right\|_p^2 \le z_p^2
    \int_0^T \int_{\bR}\|g(t,x)\|_p^2 \ud t\ud x,
\end{equation}
where $z_p > 0$ is a constant depending on $p$.

\medskip

$\bullet$ {\bf Skorohod integral.} The {\em divergence operator} $\delta$ is the
adjoint of the operator $D$. If $u \in {\rm Dom} (\delta)$, then $\delta(u) \in
L^2(\Omega)$ is characterized by the duality relation:
\begin{equation}\label{duality}
  \bE(F \delta(u))=\bE\left(\langle DF,u \rangle_{\cH}\right), \quad
  \forall F \in \bD^{1,2}.
\end{equation}
In particular, $\bE[\delta(u)] = 0$. For any $u \in \mbox{Dom} (\delta)$, we say
that $\delta(u)$ is the {\em Skorohod integral} of $u$. If $g$ is
predictable and $\bE\int_0^\infty\int_{\bR}|g(s,x)|^2\,\ud x\,\ud s < \infty$, then $g \in \mathrm{Dom}(\delta)$ and
$\delta(g) = \int_0^\infty\int_{\bR}g(s,x)\xi(\ud s,\ud x)$.

\medskip

We will use the following results. We refer the reader to \cite{nualart06} for
more details.

\medskip

(i) {\em Heisenberg commutation principle:} we use the {following
result (Proposition 1.3.8 of \cite{nualart06})}. Suppose that $u \in \bD^{1,2}(\cH)$
is supported in $[0, T]\times\bR$. Assume that $D_{t,x}u \in
\mathrm{Dom}(\delta)$ for almost every $(t,x)$, and that
$\{\delta(D_{t,x}u)\}_{t,x}$ has a jointly measurable version in
$L^2(\Omega;\cH)$. Then $\delta(u) \in \bD^{1,2}$ and, almost everywhere in
$(t,x)$,
\begin{equation}\label{heisen}
  D_{t,x} (\delta(u))=u(t,x)+\int_{0}^{T} \int_{\bR}D_{t,x}u(s,y) \xi(\delta s,\delta y).
\end{equation}

(ii) {\em Chain rule:} if $F \in \bD^{1,2}$ and $\varphi \in C^1(\bR)$ 
has bounded derivative, then $\varphi(F) \in \bD^{1,2}$ and
\begin{equation}\label{chain}
  D \varphi(F)=\varphi'(F) DF.
\end{equation}

(iii) {\em Clark-Ocone formula:} if $F \in \bD^{1,2}$ and $F$ is
$\cF_T$-measurable, then
\begin{equation}\label{clark-ocone}
  F=\bE(F)+\int_0^T \int_{\bR} \bE[D_{t,x}F|\cF_t] \xi(\ud t,\ud x).
\end{equation}
 Here and below, conditional expectations used as stochastic integrands
denote their predictable versions, defined up to equality in
$L^2(\Omega\times[0,T]\times\bR)$.

\subsection{Green function of SHE}

In this section, we include some background material about a random process
$\cG$ which is very helpful for our calculations. This process appeared also
in \cite{alberts} and \cite{gu-pu}.

\medskip

Let $s \ge 0$ and $y \in \bR$ be fixed. We denote by $\{\cG(t,x;s,y);t > s,
{x} \in \bR\}$ the solution of the following (time-delayed) stochastic
heat equation:
\[
  \begin{cases}
    \displaystyle
    \partial_t \cG (t,x;s,y)=\frac{1}{2}\partial_x^2 \cG (t,x;s,y)+\cG(t,x;s,y)\xi(t,x), \
    t>s,\ x\in\bR, \\
    \cG(s,\cdot;s,y)=\delta_y.
  \end{cases}
\]
This means that $\{\cG(t,x;s,y);t > s, x \in \bR\}$ is a measurable
and $(\cF_t)_{t\ge s}$-adapted random field such that for any $t >
s$ and $x \in \bR$, with probability 1,
\begin{equation}\label{G-eq}
  \cG(t,x;s,y)=p_{t-s}(x-y)+\int_s^t \int_{\bR} p_{t-r}(x-z) \cG(r,z;s,y) \xi(\ud r,\ud z).
\end{equation}
Taking $s = 0$ and $y = 0$ in \eqref{G-eq} gives exactly \eqref{eq-Z}, so
$\cG(\cdot,\cdot;0,0)$ and $Z$ are both random field solutions of \eqref{SHE}.
Such a solution is unique, by Theorem 2.4 of \cite{chen-dalang15}. Therefore,
for any $t > 0$ and $x \in \bR$, with probability 1,
\begin{equation}\label{GZ}
  \cG(t,x;0,0)=Z(t,x).
\end{equation}

Multiplying \eqref{G-eq} by $Z(s,y)$ and using a property of the It\^o integral
which states that an $\cF_s$-measurable factor can be moved inside an It\^o
integral starting at $s$, we obtain:
\[
  \cG(t,x;s,y)Z(s,y)=p_{t-s}(x-y)Z(s,y)+\int_s^t \int_{\bR} p_{t-r}(x-z) \cG(r,z;s,y)
    Z(s,y)\xi(\ud r,\ud z).
\]

On the other hand, we import Malliavin differentiability of $Z(t,x)$
and the derivative equation below from Section 2 of \cite{gu-pu}. The derivative
vanishes for $s > t$ by \eqref{Mal-zero}, and for $0 < s < t$, with the identity
understood almost everywhere in $(s,y)$, we have:
\[
  D_{s,y}Z(t,x)=p_{t-s}(x-y)Z(s,y)+\int_s^t \int_{\bR} p_{t-r}(x-z) D_{s,y}Z(r,z) \xi(\ud r,\ud z).
\]
The two right-hand sides above are the mild formulation of the same equation,
namely
\[
  \begin{cases}
    \displaystyle \partial_t V(t,x)=\frac12\partial_x^2 V(t,x)+V(t,x)\xi(t,x),
    \quad t>s,\ x\in\bR,\\
    V(s,\cdot)=Z(s,y)\delta_y,
  \end{cases}
\]
whose random field solution is unique by Theorem 2.4 of \cite{chen-dalang15}.
The initial datum is $\cF_s$-measurable and the noise on $(s,\infty)$ is
independent of $\cF_s$, so that theorem applies conditionally on $\cF_s$.
Therefore, for ${0 < s < t}$ and $x \in \bR$, with the Malliavin
identity understood almost everywhere in $(s,y)$,
\begin{equation}\label{DZ-prod}
  D_{s,y}Z(t,x)=\cG(t,x;s,y)Z(s,y).
\end{equation}

\begin{remark}
  {\rm  The same factorization argument applies to other initial data
  and semigroups whenever the corresponding mild Green kernel, Malliavin
  differentiability and uniqueness for the random initial-value equation above
  are available. These conditions must be checked for each such extension. }
\end{remark}

The family $\{\cG(t,x;s,y);0 \le s < t,\ x,y \in \bR\}$ is called the
{\em Green function of SHE}. We define the {\em scaled} Green function of SHE
by:
\[
  \overline{\cG}(t,x;s,y)=\frac{\cG(t,x;s,y)}{p_{t-s}(x-y)} \quad \quad \quad  0\le s<t, \ x,y\in \bR.
\]
At the diagonal we set $\overline{\cG}(s,x;s,y) = 1$ by convention;
the quotient above is used only when $s < t$. {We do not assign} any pointwise value to
the unnormalized kernel at the diagonal.

 We import the following uniform moment bound from Lemma 3.3 of \cite{chen-jimenez26}, which follows from Lemma 3.2 of \cite{alberts} together with
time-translation and shear invariance. For any $k \in \bR$ and $T > 0$,
\begin{equation}\label{barG}
  \sup_{\substack{0\le s \le t \le T \\ x,y \in \bR}} \bE[\overline{\cG}(t,x;s,y)^k]<\infty.
\end{equation}

By Theorem 2.19.(viii) of \cite{COTX}, $\cG$ satisfies the following
Chapman-Kolmogorov relation: with probability 1, for any $0 \le s < r < t$ and
$x, y \in \bR$,
\begin{equation}\label{CK-G}
  \cG (t,x;s,y)=\int_{\bR} \cG (t,x;r,z) \cG (r,z;s,y)\ud z.
\end{equation}

\section{Proof of Theorem \ref{T:FCLT}}
\label{section-FCLT}

In this section, we present the proof of Theorem \ref{T:FCLT}. We define
\[
  X_N(t)=\frac{1}{\sqrt{N \log N}} S_N(t).
\]

We fix $0 < \varepsilon < T$. We will prove that for any $N \ge 2$, $p \ge 2$,
$\alpha \in (0,\frac{1}{2})$ and $t_1, t_2 \in [\varepsilon, T]$,
\begin{equation}\label{main-est}
  \bE|X_{N}(t_2)-X_N(t_1)|^p \le C_{p,\alpha,\varepsilon, T} |t_2-t_1|^{p\alpha/2}.
\end{equation}

Choose $p > 4$ and $2/p < \alpha < 1/2$. By the Kolmogorov--Chentsov
criterion (Theorem 3.23 of \cite{kallenberg02}), \eqref{main-est} gives a continuous
modification with every H\"older exponent $\gamma < \alpha/2-1/p$ on
$[\varepsilon, T]$. On the intervals $[1/m, m]$, $m \ge 2$, the
continuous versions agree almost surely on overlaps: they agree at all rational
times there, and continuity extends the equality. They therefore define one
locally H\"older continuous modification on $(0,\infty)$. Choosing $p$
arbitrarily large and $\alpha$ arbitrarily close to $1/2$ proves the asserted
exponents.

The laws of $X_N(\varepsilon)$ are tight by the finite-dimensional convergence
of Theorem 1.2 of \cite{gu-pu}. Together with the uniform increment
estimate \eqref{main-est}, whose exponent $p\alpha/2$ exceeds one, the tightness
criterion (Corollary 16.9 of \cite{kallenberg02}) gives tightness in $C[\varepsilon,
T]$. Every subsequential limit has the finite-dimensional laws of $\sqrt{2} B$
by the same imported theorem. Since evaluations at a countable dense set
determine the law on this continuous-path space, the limit is unique. This
proves convergence of the full family.


The remaining part of the article is dedicated to the proof of \eqref{main-est}.

\subsection{Clark-Ocone representation}

In this section, we give a representation of $X_N(t)$, derived from {
the} Clark-Ocone formula \eqref{clark-ocone}.

\medskip

Recall definition \eqref{def-h} of $h$.  We first justify applying the
Malliavin chain rule to the logarithm. For fixed $t > 0$ and $x \in \bR$, strict
positivity \cite{alberts} and the imported positive and negative moments
in \eqref{barG} imply $\log Z(t,x) \in L^2(\Omega)$. By \eqref{DZ-prod},
Gaussian factorization and generalized H\"older,
\[
  \left\|\frac{D_{s,y}Z(t,x)}{Z(t,x)}\right\|_2
  \le C_T p_{s(t-s)/t}(y-sx/t),\qquad 0<s<t.
\]
The square of this bound is integrable in $(s,y)$, since its spatial integral is
a constant times $[s(t-s)/t]^{-1/2}$. Choose smooth truncations $\phi_n$ of
$\log$ that agree with it on $[1/n, n]$, satisfy $|\phi_n'(z)| \le C/z$ for $z
> 0$, and $|\phi_n(z)| \le C+|\log z|$ there. For $n \ge 2$, choose $\chi_n
\in C_c^\infty((0,\infty))$ with $0 \le \chi_n \le 1$ and $\chi_n = 1$ on
$[1/n, n]$, and set $\phi_n(z) = \int_1^z\chi_n(r)r^{-1}\,\ud r$ for
$z > 0$. Extend $\phi_n$ constantly to $z \le 0$; this is smooth because
$\chi_n$ vanishes near zero. Then $|\phi_n(z)| \le |\log z|$ and $|\phi_n'(z)|
\le 1/z$ for $z > 0$, as required. The bounded-derivative chain rule applies to
$\phi_n(Z(t,x))$. Dominated convergence gives convergence of the functions in
$L^2(\Omega)$ and of their derivatives in $L^2(\Omega;\cH)$ to $\log Z(t,x)$ and
$DZ(t,x)/Z(t,x)$, respectively. Closedness of $D$ proves $h(t,x) \in \bD^{1,2}$
and the required chain rule.

Applying the chain rule \eqref{chain}, followed by the product
decomposition \eqref{DZ-prod} of $DZ$, the initial-value relation \eqref{GZ},
and the definition of $\overline{\cG}$, we get:
\begin{align}
  \nonumber
  D_{s,y} h(t,x)&=\frac{D_{s,y} Z(t,x)}{Z(t,x)}=\frac{\cG(t,x;s,y)Z(s,y)}{Z(t,x)}
  =\frac{\cG(t,x;s,y)\cG(s,y;0,0)}{\cG(t,x;0,0)} \\
  \label{Dh}
  &=\frac{p_{t-s}(x-y)p_s(y)}{p_t(x)} \cdot
    \frac{\overline{\cG}(t,x;s,y)\overline{\cG}(s,y;0,0)}{\overline{\cG}(t,x;0,0)}.
\end{align}

We use the following property of the heat kernel: for any $t, s > 0$ and $x, y
\in \bR$,
\[
  p_t(x) p_s(y)=p_{t+s}(x-y) p_{\frac{ts}{t+s}} \left( \frac{sx+ty}{t+s}\right),
\]
(see e.g. Lemma A.4 of \cite{chen-dalang15}). It follows that
\begin{equation}\label{def-q}
  \frac{p_{t-s}(x-y)p_s(y)}{p_t(x)}=p_{\frac{s(t-s)}{t}}\Big(y-\frac{s}{t}x\Big)=: q_{s}^{t,x}(y).
\end{equation}

With the Clark-Ocone formula \eqref{clark-ocone} in mind, we write
\[
  \bE[D_{s,y} h(t,x)|\cF_s]=\cA(t,x;s,y) q_{s}^{t,x}(y),
\]
where
\[
  \cA(t,x;s,y)=\bE\Big[\frac{\overline{\cG}(t,x;s,y) \overline{\cG}(s,y;0,0) }{\overline{\cG}(t,x;0,0)}|\cF_s\Big].
\]
We say that $\cA$ is the {\em Clark--Ocone kernel}; see also (3.1)
of \cite{gu-pu}. Therefore,
\begin{align*}
  h(t,x)-\bE[h(t,x)] 
  &= \int_0^t \int_{\bR} \cA(t,x;s,y) q_{s}^{t,x}(y)\xi(\ud s,\ud y).
\end{align*}

The moment bound of Lemma \ref{A-bound}, proved below using only the
imported Green-kernel moments, gives the stochastic Fubini integrability
condition. Indeed, for fixed $t > 0$,
\[
  \int_0^N\left(\int_0^t\int_{\bR}
    \|\cA(t,x;s,y)q_s^{t,x}(y)\|_2^2\,\ud y\,\ud s\right)^{1/2}\ud x
  \le C_T N\left(\int_0^t
    \left(\frac{s(t-s)}t\right)^{-1/2}\ud s\right)^{1/2}<\infty.
\]
Here $\int_{\bR}p_a(y)^2\ud y = (4\pi a)^{-1/2}$, and both endpoint
singularities in the time integral are integrable. To choose the
conditional integrands jointly in $x$, we take the predictable projection in the
product space $L^2([0,N]\times[0,t]\times\bR\times\Omega)$, with Lebesgue
measure in the first three variables. The spatial parameter is retained in this
projection. The displayed bound, uniformly for $x \in [0, N]$, also gives
finiteness of this product-space norm. For almost every $x$, the projected
integrand is the predictable conditional expectation in the Clark--Ocone
formula. Thus, the jointly measurable predictable integrand satisfies the
stochastic Fubini condition displayed above. Therefore, by the stochastic
Fubini theorem,
\begin{equation}\label{S-Aq}
  X_N(t)=\frac{1}{\sqrt{N \log N}}\int_0^t \int_{\bR} \left(\int_0^N  \cA(t,x;s,y)
    q_{s}^{t,x}(y)\ud x \right) \xi(\ud s,\ud y).
\end{equation}

\begin{lemma}\label{A-bound}
  For any $p \ge 1$ and $T > 0$,
  \[
    \sup_{\substack{ 0<s<t\le T \\ x,y \in \bR}} \|\cA(t,x;s,y)\|_p <\infty.
  \]
\end{lemma}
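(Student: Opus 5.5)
Since conditional expectation is a contraction on $L^p(\Omega)$ for $p\ge 1$ (conditional Jensen), it suffices to bound the unconditioned ratio:
\[
  \|\cA(t,x;s,y)\|_p \le \Big\|\frac{\overline{\cG}(t,x;s,y)\,\overline{\cG}(s,y;0,0)}{\overline{\cG}(t,x;0,0)}\Big\|_p .
\]
We then use the generalized H\"older inequality with three factors and exponents $3p$ each:
\[
  \|\cA(t,x;s,y)\|_p \le \|\overline{\cG}(t,x;s,y)\|_{3p}\,\|\overline{\cG}(s,y;0,0)\|_{3p}\,\|\overline{\cG}(t,x;0,0)^{-1}\|_{3p}.
\]
Each factor is a moment $\bE[\overline{\cG}(t',x';s',y')^k]^{1/(3p)}$ with $k=3p$ or $k=-3p$ and $0\le s'\le t'\le T$. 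The uniform bound \eqref{barG}, valid for every real $k$ and uniform over $0\le s'\le t'\le T$ and $x',y'\in\bR$, controls all three factors by a constant depending only on $p$ and $T$. Taking the supremum over $0<s<t\le T$ and $x,y\in\bR$ yields the claim.

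The only points needing care are measure-theoretic. First, the ratio must be well defined: $\overline{\cG}(t,x;0,0)=Z(t,x)/p_t(x)$ is a.s.\ strictly positive by \cite{alberts}, and \eqref{GZ} identifies it with $Z$. Second, the conditional expectation must be defined, which holds because the bound just obtained (with $p=1$) shows the ratio is integrable. I expect no real obstacle beyond correctly invoking the negative-moment case $k<0$ of \eqref{barG} for the denominator. That case is precisely what lets the crude H\"older split work without exploiting any cancellation between numerator and denominator.
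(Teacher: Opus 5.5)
Your proposal is correct and is essentially the paper's own proof: conditional Jensen to remove the conditional expectation, the three-factor generalized H\"older inequality with exponents $3p$, and the uniform bound \eqref{barG} at orders $3p$ for the two numerator factors and $-3p$ for the inverse factor. Your remarks on strict positivity of $\overline{\cG}(t,x;0,0)$ and integrability of the ratio are sound supporting details that the paper handles in the surrounding text.
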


\begin{proof}
  By Jensen's inequality for conditional expectation and generalized H\"older's
  inequality,
  \begin{align*}
    \|\cA(t,x;s,y)\|_p & \le \left\| \frac{\overline{\cG}(t,x;s,y) \overline{\cG}(s,y;0,0)
      }{\overline{\cG}(t,x;0,0)} \right\|_p \\
    & \le \|\overline{\cG}(t,x;s,y)\|_{3p} \|\overline{\cG}(s,y;0,0) \|_{3p}
    \|\overline{\cG}(t,x;0,0)^{-1}\|_{3p}.
  \end{align*}
  The conclusion follows by relation \eqref{barG}. In particular, the
  inverse factor uses the moment estimate at order $-3p$; the first two
  factors use order $3p$.
\end{proof}

\subsection{Increment analysis}

This section is the core of the proof of Theorem \ref{T:FCLT}, and consists of a
detailed analysis of the increments of $X_N$.

\medskip

Let $0 < \varepsilon \le t_1 < t_2 \le T$ be arbitrary. Using \eqref{S-Aq}, we
write
\begin{align}
  \nonumber
  & X_N(t_2)-X_N(t_1) =\frac{1}{\sqrt{N \log N}}\int_{t_1}^{t_2}  \int_{\bR} \left(\int_0^N
    \cA(t_2,x;s,y) q_{s}^{t_2,x}(y) \ud x \right)
  \xi(\ud s,\ud y)+\\
  \nonumber
  & \quad \quad \frac{1}{\sqrt{N \log N}} \int_0^{t_1} \int_{\bR} \left(\int_0^N
    \Big(\cA(t_2,x;s,y) q_{s}^{t_2,x}(y)-\cA(t_1,x;s,y) q_{s}^{t_1,x}(y) \Big) \ud x
    \right) \xi(\ud s,\ud y)\\
  \label{XN-dec}
  & \quad \quad =:I_N(t_1,t_2)+J_N(t_1,t_2).
\end{align}

We study separately the moments of $I_N$ and $J_N$. We begin with $I_N$.

\begin{proposition}\label{IN}
  For any $p \ge 2$, $N \ge 2$, and $0 < \varepsilon \le t_1 < t_2 \le T$,
  \[
    \bE|I_N(t_1,t_2)|^p \le C_{p,\varepsilon,T} (t_2-t_1)^{p/2}.
  \]
\end{proposition}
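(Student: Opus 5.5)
The plan is to combine the Burkholder--Davis--Gundy bound \eqref{BDG}, Minkowski's inequality in the $\ud x$ variable, and the uniform moment bound of Lemma \ref{A-bound}. The remaining deterministic Gaussian integral turns out to be of order $N$, not $N\log N$. The restriction $s\ge t_1\ge\varepsilon$ prevents the $1/s$ singularity that produces the logarithm.

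\emph{Step 1 (BDG).} The integrand $(s,y)\mapsto\int_0^N \cA(t_2,x;s,y)q_s^{t_2,x}(y)\,\ud x$ is predictable (from the product-space predictable projection used for \eqref{S-Aq}). Applying \eqref{BDG} on $[t_1,t_2]$ gives
\[
\bE|I_N(t_1,t_2)|^p\le \frac{z_p^p}{(N\log N)^{p/2}}\left(\int_{t_1}^{t_2}\int_{\bR}\Big\|\int_0^N \cA(t_2,x;s,y)q_s^{t_2,x}(y)\,\ud x\Big\|_p^2\,\ud y\,\ud s\right)^{p/2}.
\]

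\emph{Step 2 (Minkowski and Lemma \ref{A-bound}).} Since $q_s^{t_2,x}(y)\ge0$ is deterministic, Minkowski's integral inequality followed by Lemma \ref{A-bound} yields
\[
\Big\|\int_0^N \cA(t_2,x;s,y)q_s^{t_2,x}(y)\,\ud x\Big\|_p\le C_{p,T}\int_0^N q_s^{t_2,x}(y)\,\ud x .
\]

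\emph{Step 3 (Gaussian computation).} Write $a=s(t_2-s)/t_2$. Expanding the square and integrating in $y$ with the semigroup property gives
\[
\int_{\bR}\Big(\int_0^N q_s^{t_2,x}(y)\ud x\Big)^2\ud y=\int_0^N\!\!\int_0^N p_{2a}\Big(\tfrac{s}{t_2}(x-x')\Big)\ud x\,\ud x'\le N\int_{\bR}p_{2a}\Big(\tfrac{s}{t_2}u\Big)\ud u=N\frac{t_2}{s}.
\]
This bound is uniform in $a$, so the singularity of $q$ as $s\uparrow t_2$ causes no trouble. For $s\ge t_1\ge\varepsilon$ we have $t_2/s\le T/\varepsilon$. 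The double integral in Step 1 is therefore at most $C_{p,T}^2\,N(T/\varepsilon)(t_2-t_1)$.

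\emph{Step 4 (conclusion).} Substituting back,
\[
\bE|I_N(t_1,t_2)|^p\le C_{p,\varepsilon,T}\,(\log N)^{-p/2}(t_2-t_1)^{p/2}\le C_{p,\varepsilon,T}(\log 2)^{-p/2}(t_2-t_1)^{p/2},
\]
using $N\ge2$.

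There is no serious obstacle here. The only points to check carefully are the measurability and predictability of the $x$-integrated integrand, which is inherited from the construction preceding \eqref{S-Aq}, and the legitimacy of Minkowski's inequality. The latter holds because $\cA q$ is jointly measurable and the $L^p$ bound from Lemma \ref{A-bound} is uniform in all variables.
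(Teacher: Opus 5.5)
Your proposal is correct and follows the paper's proof: BDG on $[t_1,t_2]$, then Minkowski with Lemma \ref{A-bound}, then the semigroup computation bounding the spatial integral by $Nt_2/s\le NT/\varepsilon$, and finally $\log N\ge\log 2$. The only cosmetic difference is that you carry the power $p/2$ from the start, whereas the paper works with $\|I_N\|_p^2$ and raises to $p/2$ at the end.
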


\begin{proof}
  By inequality \eqref{BDG}, Minkowski's inequality, and Lemma \ref{A-bound},
  \begin{align}
    \nonumber
    \|I_N(t_1,t_2)\|_p^2 & \le \frac{z_p^2}{N \log N} \int_{t_1}^{t_2} \int_{\bR} \left\|\int_0^N
      \cA(t_2,x;s,y) q_{s}^{t_2,x}(y) \ud x \right\|_p^2 \ud y\ud s
      \\
    \nonumber
    & \le \frac{z_p^2}{N \log N} \int_{t_1}^{t_2}  \int_{\bR} \left( \int_0^N \|\cA(t_2,x;s,y)\|_p
      q_{s}^{t_2,x}(y) \ud x \right)^2 \ud y\ud s \\
    \label{IN-bound}
    & \le   \frac{C_{p,T}}{N \log N} \int_{t_1}^{t_2} \int_{\bR} \left(\int_0^N q_{s}^{t_2,x}(y)
      \ud x \right)^2 \ud y\ud s.
  \end{align}
  Using definition \eqref{def-q} of $q_{s}^{t,x}$ and the semigroup property
  $p_t*p_s = p_{t+s}$, we see that
  \begin{align}
    \nonumber
    \int_{\bR}\left(\int_0^N q_{s}^{t,x}(y) \ud x \right)^2 \ud y &
      =\int_0^N \int_0^N p_{\frac{2s(t-s)}{t}} \left(\frac{s}{t}(u-v)\right) \ud u
      \ud v \\
    \label{q-int}
    & \le \int_0^N \int_{\bR} p_{\frac{2s(t-s)}{t}} \left(\frac{s}{t}(u-v)\right) \ud u
      \ud v =\frac{Nt}{s} \le \frac{NT}{\varepsilon}.
  \end{align}
  Therefore, 
  \[
    \|I_N(t_1,t_2)\|_p^2
    \le\frac{C_{p,T}T}{\varepsilon\log N}(t_2-t_1)
    \le C_{p,\varepsilon,T}(t_2-t_1),
  \]
  since $N \ge 2$. Raising this bound to the power $p/2$ proves the claim.
\end{proof}

It remains to estimate $\bE|J_N(t_1,t_2)|^p$. For this, we may try to add and
subtract $\cA(t_1,x;s,y)q_s^{t_2,x}(y)$. As in the proof of
Proposition \ref{IN}, this would lead to $\|\cA(t_2,x;s,y)-\cA(t_1,x;s,y)\|_p$,
and subsequently to
$\|\overline{\cG}(t_2,x;s,y)-\overline{\cG}(t_1,x;s,y)\|_{4p}$. But this
approach is not feasible. See also Remark 4.3 of \cite{gu-pu}. Therefore, we
pursue a different method here.

\medskip

Using the change of variable $x = t u$, we write:
\begin{equation}\label{com-ray}
  \int_0^N \cA(t,x;s,y)q_s^{t,x}(y)\ud x=\int_0^{N/t}t \cA(t,tu;s,y) q_s^{t,tu}(y)\ud u.
\end{equation}
We call $u$ the {\em ray variable}. We apply this for $t = t_1$ and $t = t_2$.
This leads to the following ``common-ray expression'' for $J_N$:
\begin{align}
  \nonumber
  J_N(t_1,t_2)&= \frac{1}{\sqrt{N \log N}}\int_0^{t_1} \int_{\bR}  \int_0^{N/t_2}t_2
    \cA(t_2,t_2u;s,y) q_s^{t_2,t_2u}(y)\ud u \xi(\ud s,\ud y)\\
  \label{dec-JN}
  &-\frac{1}{\sqrt{N \log N}}\int_0^{t_1} \int_{\bR} \int_0^{N/t_1}t_1 \cA(t_1,t_1u;s,y)
    q_s^{t_1,t_1u}(y)\ud u \xi(\ud s,\ud y).
\end{align}
The difficulty is to transform this expression into a form in which the two
$\cA$ integrands have the {\em same} time variable $t_1$.

\medskip
We first show that the moments of $J_N(t_1,t_2)$ are uniformly
bounded, which yields trivially the desired bound \eqref{main-est}, {\bf when
$t_1, t_2$ are far apart}, i.e. $t_2-t_1 > \frac{\varepsilon}{2}$.

\begin{proposition}\label{JN-far}
  For any $p \ge 2$, $N \ge 2$, and $0 < \varepsilon \le t_1 < t_2
  \le T$,
  \begin{equation}\label{JN-bound1}
    \bE|J_N(t_1,t_2)|^p \le C_{p,\varepsilon,T}.
  \end{equation}
  Consequently, if $t_2-t_1 > \frac{\varepsilon}{2}$, for any $\alpha > 0$,
  \begin{equation}\label{JN-bound2}
    \bE|J_N(t_1,t_2)|^p \le C_{p,\alpha,\varepsilon,T}(t_2-t_1)^{p\alpha/2}.
  \end{equation}
\end{proposition}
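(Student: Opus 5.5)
The plan is to bound the two stochastic integrals that make up $J_N(t_1,t_2)$ separately, rather than their difference, exactly as in the proof of Proposition \ref{IN}. Write $J_N(t_1,t_2)=J_N^{(2)}-J_N^{(1)}$ with
\[
  J_N^{(i)}=\frac{1}{\sqrt{N\log N}}\int_0^{t_1}\int_{\bR}\left(\int_0^N \cA(t_i,x;s,y)q_s^{t_i,x}(y)\,\ud x\right)\xi(\ud s,\ud y),\qquad i=1,2.
\]
By $|a-b|^p\le 2^{p-1}(|a|^p+|b|^p)$ it suffices to bound $\|J_N^{(i)}\|_p^2$ uniformly in $N\ge 2$ and in $t_1,t_2\in[\varepsilon,T]$. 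The common-ray form \eqref{dec-JN} is not needed here, since it is just a change of variables of the same integrals.

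For each $i$, I will apply \eqref{BDG} and then Minkowski's inequality in the $\ud x$ integral. Lemma \ref{A-bound} then replaces $\|\cA(t_i,x;s,y)\|_p$ by a constant $C_{p,T}$, which gives
\[
  \|J_N^{(i)}\|_p^2\le \frac{C_{p,T}}{N\log N}\int_0^{t_1}\int_{\bR}\left(\int_0^N q_s^{t_i,x}(y)\,\ud x\right)^2\ud y\,\ud s .
\]
The inner $\ud y$ integral is computed by \eqref{q-int}. Extending the $\ud u$ integral to $\bR$ bounds it by $Nt_i/s$. The resulting $\ud s$ integral $\int_0^{t_1} t_i/s\,\ud s$ diverges logarithmically at $s=0$, so this crude bound is not enough. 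This small-$s$ singularity is the only real obstacle, and it is exactly where the factor $\log N$ must be spent.

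To handle it, I will keep the restriction $u,v\in[0,N]$ in \eqref{q-int}. The exact expression is
\[
  \int_0^N\!\!\int_0^N p_{\frac{2s(t-s)}{t}}\Big(\frac{s}{t}(u-v)\Big)\ud u\,\ud v=\frac{t}{s}\int_0^N\!\!\int_{0}^{sN/t} p_{\frac{2s(t-s)}{t}}(w-\tfrac{s}{t}v)\,\ud w\,\ud v ,
\]
and it is bounded by $\min\{Nt/s,\;N^2 C\,(s(t-s)/t)^{-1/2}(s/t)\cdot\ldots\}$. More simply, since $p_a\le (2\pi a)^{-1/2}$, the crude bound $N^2 \sup p \le N^2 C\,(s(t-s)/t)^{-1/2}$ handles small $s$. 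For $s\le t_1/2$ we have $t-s\ge t/2$, so this is at most $C_{\varepsilon,T}N^2 s^{-1/2}$. The $s$-integral is then split at $s_0=1/N$. On $[0,1/N]$, the second bound gives $C N^2\int_0^{1/N}s^{-1/2}\ud s= CN^{3/2}$, which is too big. So I will instead use the sharper bound $\int_0^N\int_0^N p_{a}(\frac{s}{t}(u-v))\,\ud u\,\ud v\le N\cdot \frac{t}{s}\min\{1,\, \frac{sN}{t}(2\pi a)^{-1/2}\}$, obtained by integrating $w$ over an interval of length $sN/t$. With $a\asymp s$ this is $\le C N\min\{s^{-1},N s^{-1/2}\}$. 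The two branches balance at $s=N^{-2}$. Hence
\[
  \int_0^{t_1}\min\{s^{-1},Ns^{-1/2}\}\,\ud s\le \int_0^{N^{-2}}Ns^{-1/2}\ud s+\int_{N^{-2}}^{T}s^{-1}\ud s\le C+2\log N+\log T .
\]
On $s\in[t_1/2,t_1]$, the bound $Nt_i/s\le 2NT/\varepsilon$ is harmless. Together these give $\|J_N^{(i)}\|_p^2\le C_{p,\varepsilon,T}(\log N+1)/\log N\le C_{p,\varepsilon,T}$ for $N\ge2$, which proves \eqref{JN-bound1}.

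Finally, \eqref{JN-bound2} follows because $t_2-t_1>\varepsilon/2$ implies $(t_2-t_1)^{p\alpha/2}\ge(\varepsilon/2)^{p\alpha/2}$. Therefore
\[
  C_{p,\varepsilon,T}\le C_{p,\varepsilon,T}(2/\varepsilon)^{p\alpha/2}(t_2-t_1)^{p\alpha/2}.
\]
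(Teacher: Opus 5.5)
Your proposal is correct and follows essentially the same route as the paper. You split $J_N=J_N^{(2)}-J_N^{(1)}$, apply \eqref{BDG}, Minkowski and Lemma \ref{A-bound}, bound the spatial integral by $\frac{N}{s}\wedge\frac{N^2}{\sqrt{s}}$ (the content of Lemma \ref{pcs-lem}, rederived in the $x$-variable rather than the ray variable), and split the $\ud s$ integral at $s=N^{-2}$. Your ``sharper'' bound is just the minimum of the two bounds you already had, so the false start at $s_0=1/N$ is fixed simply by moving the split point to $N^{-2}$, exactly as in the paper.
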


\begin{proof}
  We use decomposition \eqref{dec-JN}, written as $J_N =
  :J_N^{(2)}-J_N^{(1)}$. By Minkowski's inequality,
  \[
    \|J_N\|_p^2 \le 2 \big(\|J_N^{(1)}\|_p^2+\|J_N^{(2)}\|_p^2\big).
  \]
  We treat the two terms by the same method: for $i = 1, 2$, as in the proof of
  Proposition \ref{IN}, using \eqref{BDG} and Minkowski's inequality,
  \begin{align*}
    \|J_N^{(i)}\|_p^2 & \le \frac{z_p^2 t_i^2}{N \log N} \int_0^{t_1} \int_{\bR}
      \left(\int_0^{N/t_i} \|\cA(t_i,t_i u;s,y)\|_p q_{s}^{t_i,t_i u}(y) \ud u \right)^2
      \ud y\ud s \\
    & \le \frac{C_{p,T}}{N \log N} \int_0^{t_1} \int_{\bR}  \left(\int_0^{N/t_i} q_{s}^{t_i,t_i
      u}(y) \ud u \right)^2 \ud y\ud s.
  \end{align*}
  Recalling definition \eqref{def-q} of $q$, we see that $q_{s}^{t_i,t_i u}(y) =
  p_{\frac{s(t_i-s)}{t_i}}(y-su)$. By Lemma \ref{pcs-lem},
  \[
    \|J_N^{(i)}\|_p^2 \le \frac{C_{p,\varepsilon,T}}{N \log N} \int_0^{t_1} \left(\frac{N}{s}
      \wedge \frac{N^2}{\sqrt{s}}\right) \ud s.
  \]

  We analyze the integral above. If $t_1 > N^{-2}$, we write it as the sum of
  integrals on $[0, N^{-2}]$ and $[N^{-2}, t_1]$ and we bound the integrand
  differently on each region:
  \begin{align*}
    \int_0^{t_1} \left(\frac{N}{s} \wedge \frac{N^2}{\sqrt{s}}\right) \ud s \le
      \int_0^{N^{-2}} \frac{N^2}{\sqrt{s}} \ud s +\int_{N^{-2}}^{t_1} \frac{N}{s}
      \ud s=2N+N \log(t_1 N^2) \le C_T N \log N,
  \end{align*}
  whereas if $t_1 \le N^{-2}$,
  \begin{align*}
    \int_0^{t_1} \left(\frac{N}{s} \wedge \frac{N^2}{\sqrt{s}}\right) \ud s \le
      \int_0^{N^{-2}} \frac{N^2}{\sqrt{s}} \ud s =2N \le \frac{2}{\log 2} N \log N.
  \end{align*}
  This proves \eqref{JN-bound1}. Relation \eqref{JN-bound2} follows immediately
  from \eqref{JN-bound1}.
\end{proof}

It remains to estimate $\bE|J_N(t_1,t_2)|^p$ in the case when {\bf $t_1, t_2$
are close to each other}, i.e. $t_1 < t_2 \le t_1+\frac{\varepsilon}{2}$. This
will require significant effort.

\medskip

We return to decomposition \eqref{dec-JN}. We write the integral on $[0, N/t_1]$
as the sum of integrals on $[0, N/t_2]$ and $[N/t_2, N/t_1]$. Thus, we arrive at
the decomposition:
\begin{equation}\label{dec-JN2}
  J_N(t_1,t_2)=J_N'(t_1,t_2)-J_N''(t_1,t_2),
\end{equation}
where
\begin{align*}
  J_{N}'(t_1,t_2)&=\frac{1}{\sqrt{N \log N }} \int_0^{t_1}\int_{\bR} \left(\int_0^{N/t_2}\Big( t_2
    \cA(t_2,t_2u;s,y) q_s^{t_2,t_2 u}(y)-\right.\\
  & \qquad \qquad \qquad \qquad \qquad \qquad \quad \left.t_1 \cA(t_1,t_1u;s,y) q_s^{t_1,t_1 u}(y)
    \Big) \ud u\right) \xi(\ud s,\ud y) \\
  J_N''(t_1,t_2) &=\frac{1}{\sqrt{N \log N }} \int_0^{t_1}\int_{\bR} \left(\int_{N/t_2}^{N/t_1}t_1
    \cA(t_1,t_1u;s,y) q_s^{t_1,t_1 u}(y) \ud u\right)
    \xi(\ud s,\ud y).
\end{align*}

We study separately $J_N'$ and $J_N''$. The next result examines $J_N''$, which
is simpler, since it does not contain an $\cA$-increment. (The restriction
$t_2-t_1 \le \frac{\varepsilon}{2}$ is not needed for $J_N''$.)

\begin{proposition}\label{prop-JN''}
  For any $p \ge 2$, $N \ge 2$ and $0 < \varepsilon \le t_1 < t_2 \le T$,
  \[
    \bE|J_N''(t_1,t_2)|^p \le C_{p,\varepsilon,T}(t_2-t_1)^{p/2}.
  \]
\end{proposition}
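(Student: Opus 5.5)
The plan is to treat $J_N''$ exactly as $J_N^{(i)}$ was treated in the proof of Proposition \ref{JN-far}. The one new ingredient is that the ray interval $[N/t_2,N/t_1]$ has length
\[
L:=\frac{N}{t_1}-\frac{N}{t_2}=\frac{N(t_2-t_1)}{t_1t_2}\le \frac{N(t_2-t_1)}{\varepsilon^2}.
\]
This length is proportional to $t_2-t_1$, which is where the desired factor will come from.

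First, I apply \eqref{BDG}, then Minkowski's inequality, then Lemma \ref{A-bound}. Since $t_1\le T$, this gives
\[
\|J_N''(t_1,t_2)\|_p^2\le \frac{C_{p,T}}{N\log N}\int_0^{t_1}\int_{\bR}\Big(\int_{N/t_2}^{N/t_1} q_s^{t_1,t_1u}(y)\,\ud u\Big)^2\ud y\,\ud s .
\]
Next I use $q_s^{t_1,t_1u}(y)=p_{s(t_1-s)/t_1}(y-su)$ and the semigroup property, as in \eqref{q-int}. The $\ud y$ integral then becomes $\iint_{[a,b]^2}p_{2s(t_1-s)/t_1}(s(u-v))\,\ud u\,\ud v$, with $b-a=L$. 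I bound this quantity in two ways:
\begin{itemize}
\item by $L\int_{\bR}p_{2s(t_1-s)/t_1}(sw)\,\ud w=L/s$, integrating one variable over all of $\bR$;
\item by $L^2\sup p_{2s(t_1-s)/t_1}=L^2\big(4\pi s(t_1-s)/t_1\big)^{-1/2}$.
\end{itemize}
This is the same dichotomy as in Lemma \ref{pcs-lem}, now with $N$ replaced by $L$.

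Then I split the $\ud s$ integral at $t_1/2$.
\begin{itemize}
\item On $[t_1/2,t_1]$, I use the first bound: $L/s\le 2L/\varepsilon$, which contributes at most $C_{\varepsilon,T}L$. In particular, the singularity of the second bound at $s=t_1$ never matters.
\item On $[0,t_1/2]$, I have $s(t_1-s)/t_1\ge s/2$, so the integrand is at most $C(L/s\wedge L^2/\sqrt s)$.
\end{itemize}
For the second range I repeat the computation from Proposition \ref{JN-far}, splitting at $s=L^{-2}$ when $L^{-2}<t_1$. This gives
\[
\int_0^{t_1/2}\Big(\frac{L}{s}\wedge\frac{L^2}{\sqrt s}\Big)\ud s\le 2L+L\log_+(TL^2)\le C_T\,L\,(1+\log_+ L).
\]

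Combining the two ranges,
\[
\|J_N''\|_p^2\le \frac{C_{p,\varepsilon,T}}{N\log N}\,L(1+\log_+L)\le C_{p,\varepsilon,T}(t_2-t_1)\,\frac{1+\log_+(C_{\varepsilon,T}N)}{\log N}.
\]
Here I used $L\le C_{\varepsilon,T}N(t_2-t_1)$ and $t_2-t_1\le T$, so that $\log_+ L\le \log_+(C_{\varepsilon,T} N)$. Since $N\ge 2$, we have $1+\log_+(CN)\le C'\log N$, so the last fraction is bounded. Raising the resulting bound to the power $p/2$ gives the claim.

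The only delicate point is keeping the logarithm under control. It must come out as $\log(N\cdot\text{const})$ and not, say, $\log(1/(t_2-t_1))$. This works because the logarithm is generated only near $s=0$, and there the crossover scale $L^{-2}$ is harmless. I expect this bookkeeping of the $L/s$ versus $L^2/\sqrt s$ regimes to be the main (mild) obstacle. Apart from it, the argument is a direct adaptation of Proposition \ref{JN-far}.
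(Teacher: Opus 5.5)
Your proposal is correct and is essentially the paper's proof. It uses the same reduction via \eqref{BDG}, Minkowski and Lemma \ref{A-bound}, the same two bounds $L/s$ and $L^2/\sqrt{s(t_1-s)}$ from the semigroup identity, the same splits at $t_1/2$ and at $L^{-2}$, and the same control of the logarithm through $L\le NT/\varepsilon^2$; the condition for splitting should read $L^{-2}<t_1/2$ rather than $L^{-2}<t_1$, but your bound $2L+L\log_+(TL^2)$ covers both cases, so nothing is lost.
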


\begin{proof}
  As in the proof of Proposition \ref{IN}, by \eqref{BDG} and Minkowski's
  inequality,
  \begin{align*}
    \|J_N''(t_1,t_2)\|_p^2 & \le \frac{C_{p,T}}{N \log N} \int_0^{t_1} \int_{\bR}
      \left(\int_{N/t_2}^{N/t_1} q_s^{t_1,t_1 u}(y)\ud u \right)^2
      \ud s\ud y.
  \end{align*}
  We denote $\ell = \frac{N}{t_1}-\frac{N}{t_2} > 0$. We claim that:
  \begin{equation}\label{ell-int}
    \int_{\bR}\left(\int_{N/t_2}^{N/t_1} q_{s}^{t_1,t_1u}(y) \ud u \right)^2
      \ud y \le C_T \left\{\frac{\ell}{s} \wedge \frac{\ell^2}{\sqrt{s(t_1-s)}}\right\}.
  \end{equation}

  Indeed, as for \eqref{q-int}, by the semigroup property,
  \begin{align*}
    \int_{\bR}\left(\int_{N/t_2}^{N/t_1} q_{s}^{t_1,t_1u}(y) \ud u \right)^2
      \ud y & = \int_{N/t_2}^{N/t_1} \int_{N/t_2}^{N/t_1} p_{\frac{2s(t_1-s)}{t_1}}
      \big(s(u-v)\big) \ud u \ud v \\
    & \le  \int_{N/t_2}^{N/t_1}  \int_{\bR} p_{\frac{2s(t_1-s)}{t_1}} \big(s(u-v)\big)
      \ud u \ud v =\frac{\ell}{s}.
  \end{align*}
  The second bound follows since $p_{\frac{2s(t_1-s)}{t_1}} \big(s(u-v)\big)
  \le \big(4\pi \frac{s(t_1-s)}{t_1}\big)^{-1/2} \le
  \frac{C_T}{\sqrt{s(t_1-s)}}$.

  It follows that
  \[
    \|J_{N}''(t_1,t_2)\|_p^2 \le \frac{C_{p,T}}{N\log N}
      \int_0^{t_1}\left\{\frac{\ell}{s} \wedge
      \frac{\ell^2}{\sqrt{s(t_1-s)}}\right\}\ud s.
  \]

  We write the integral above as the sum of integrals on $[0, \frac{t_1}{2}]$
  and $[\frac{t_1}{2}, t_1]$, which we call $\cI_1$ and $\cI_2$, respectively.

  We examine $\cI_1$ first. If $\frac{t_1}{2} > \ell^{-2}$, we split the
  integral on $[0, \frac{t_1}{2}]$ as the sum of integrals on $[0, \ell^{-2}]$
  and $[\ell^{-2}, \frac{t_1}{2}]$ and we bound differently the integrand on the
  two regions:
  \begin{align*}
    \cI_1 & \le \int_{0}^{\ell^{-2}} \frac{\ell^2}{\sqrt{s(t_1-s)}} \ud s
      + \int_{\ell^{-2}}^{t_1/2} \frac{\ell}{s}\ud s  \le \sqrt{\frac{2}{\varepsilon}}
      \ell^2 \int_{0}^{\ell^{-2}}\frac{1}{\sqrt{s}}\ud s +\ell \left[\log\left(
      \frac{t_1}{2}\right)-\log \ell^{-2}\right] \\
    &=   2\sqrt{\frac{2}{\varepsilon}} \ell+ \ell \log\left(\frac{t_1 \ell^2}{2}\right) \le
      C_{\varepsilon,T} \ell \big[1+\log (1+\ell)\big] \le C_{\varepsilon,T} \ell \log N,
  \end{align*}
  where for the last inequality, we used the fact that $\ell = N
  \frac{t_2-t_1}{t_1 t_2} \le \frac{N T}{\varepsilon^2}$, and hence
  \[
    1+\log(1+\ell) \le 1+\log \left( 1+ \frac{NT}{\varepsilon^2} \right) \le C_{\varepsilon,T} \log N.
  \]
  If $\frac{t_1}{2} \le \ell^{-2}$, then
  \[
    \cI_1 \le \ell^2\int_0^{t_1/2}\frac{\ud s}{\sqrt{s(t_1-s)}}
    \le\sqrt{\frac2\varepsilon}\,\ell^2\int_0^{\ell^{-2}}s^{-1/2}\,\ud s
    =2\sqrt{\frac2\varepsilon}\,\ell.
  \]
  For $\cI_2$, we have:
  \[
    \cI_2 \le \int_{t_1/2}^{t_1}\frac{\ell}{s} \ud s =\ell \Big(\log t_1-\log \frac{t_1}{2}\Big)=\ell \log 2.
  \]
  Combining these estimates, we obtain that $\cI_1+\cI_2 \le
  C_{p,\varepsilon,T}\ell \log N$. Hence,
  \[
    \|J_{N}''(t_1,t_2)\|_p^2 \le C_{p,\varepsilon,T} \frac{\ell}{N}=C_{p,\varepsilon,T}
      \frac{t_2-t_1}{t_1t_2} \le C_{p,\varepsilon,T} \frac{t_2-t_1}{\varepsilon^2}.
  \]
\end{proof}

The study of $J_N'(t_1,t_2)$ is more involved. Recalling \eqref{Dh}
and \eqref{def-q}, we define
\begin{align}
  \nonumber
  \cR(t,x;s,y)&:=D_{s,y}h(t,x)= \frac{\cG(t,x;s,y)\cG(s,y;0,0)}{\cG(t,x;0,0)}\\
  \label{def-R}
  & =
  q_{s}^{t,x} (y)\frac{\overline{\cG}(t,x;s,y)\overline{\cG}(s,y;0,0)}{\overline{\cG}(t,x;0,0)},
\end{align}
for any $0 < s < t$ and $x, y \in \bR$, so that
\begin{equation}\label{def-R1}
  \bE[\cR(t,x;s,y)|\cF_s]=\cA(t,x;s,y)q_{s}^{t,x}(y).
\end{equation}
It follows that the integrand which appears in $J_N'(t_1,t_2)$ can be written as
\begin{align*}
  & t_2\cA(t_2,t_2u;s,y) q_s^{t_2,t_2 u}(y)-t_1\cA(t_1,t_1u;s,y) q_s^{t_1,t_1 u}(y)=\\
  & \quad \quad \quad \bE[t_2\cR(t_2,t_2u;s,y)-t_1\cR(t_1,t_1 u;s,y)|\cF_s].
\end{align*}

By the generalized H\"older's inequality, an immediate consequence
of \eqref{def-R} and \eqref{barG} is:
\begin{equation}\label{R-bound}
  \big\|\cR(t,x;s,y) \big\|_p \le C_{p,T} \, q_{s}^{t,x} (y) \quad \mbox{for any $p\ge 1$}.
\end{equation}

\underline{We analyze $\cR$}. By \eqref{CK-G}, for any $0 < s < t$ and $x \in
\bR$,
\[
  \int_{\bR}\cR(t,x;s,y)\ud y=\frac{1}{\cG(t,x;0,0)}\int_{\bR}\cG(t,x;s,y)\cG(s,y;0,0)\ud y=1.
\]
By \eqref{CK-G}, $\cR$ also satisfies a Chapman-Kolmogorov relation: for any
$0 < s < r < t$ and $x, y \in \bR$,
\begin{equation}\label{CK-R}
  \cR(t,x;s,y)=\int_{\bR}\cR(t,x;r,z)\cR(r,z;s,y)\ud z.
\end{equation}
Consequently, for any $0 < s < t_1 < t_2$ and $x_1, x_2, y \in \bR$,
\begin{equation}\label{diff-R}
  \cR(t_2,x_2;s,y)-\cR(t_1,x_1;s,y)=\int_{\bR}\big( \cR(t_1,w;s,y)-\cR(t_1,x_1;s,y) \big)
    \cR(t_2,x_2;t_1,w) \ud w.
\end{equation}

Note that in the integral above, the two $\cR$'s have the {\em same} first
variable $t_1$. An increment of this form is examined in the following lemma.
For this lemma, we will use the following ``variance comparison'' property of
the heat kernel:
\begin{equation}\label{heat-prop}
  p_{t}(x) \le \sqrt{c}p_{ct}(x) \quad \mbox{for any $c\ge 1$}.
\end{equation}
Note that \eqref{heat-prop} is a consequence of the following identity:
\begin{equation}\label{heat-id}
  p_{t}(x) = \sqrt{c} e^{-\frac{c-1}{2ct}x^2}p_{ct}(x) \quad \mbox{for any $c>0$}.
\end{equation}

\begin{lemma}\label{lem-R}
  Let $p \ge 1$ and $\alpha \in (0,\frac{1}{2})$. For any $0 < s < t \le T$
  and $x_1, x_2, y \in \bR$,
  \begin{align*}
    & \|\cR(t,x_2;s,y)-\cR(t,x_1;s,y)\|_p \le \\
    & \quad \quad \quad  C_{p,\alpha,T} (t-s)^{-\frac{\alpha}{2}} |x_1-x_2|^{\alpha}
    \Big( p_{\frac{2s(t-s)}{t}} \big(y-\frac{s}{t}x_1\big)+p_{\frac{2s(t-s)}{t}}\big(y-\frac{s}{t}x_2\big) \Big).
  \end{align*}
\end{lemma}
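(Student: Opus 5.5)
Our approach is to interpolate between two bounds: a crude bound with no increment factor, and a gradient-type bound that is linear in $|x_1-x_2|$. The crude bound follows from \eqref{R-bound}:
\[
\|\cR(t,x_2;s,y)-\cR(t,x_1;s,y)\|_p \le C_{p,T}\big(q_s^{t,x_1}(y)+q_s^{t,x_2}(y)\big).
\]
By \eqref{heat-prop} with $c=2$, we have $q_s^{t,x}(y)=p_{s(t-s)/t}(y-\frac{s}{t}x)\le \sqrt2\, p_{2s(t-s)/t}(y-\frac st x)$. If $|x_1-x_2|\ge \sqrt{t-s}$, the factor $(t-s)^{-\alpha/2}|x_1-x_2|^\alpha$ is at least $1$, so the crude bound already gives the claim. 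We may therefore assume $|x_1-x_2|<\sqrt{t-s}$.

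For the increment itself, we write $\cR$ via \eqref{def-R} as $q_s^{t,x}(y)\,\Psi(x)$, where
\[
\Psi(x)=\overline{\cG}(t,x;s,y)\overline{\cG}(s,y;0,0)/\overline{\cG}(t,x;0,0).
\]
We then split
\[
\cR(t,x_2)-\cR(t,x_1)=\big(q_s^{t,x_2}(y)-q_s^{t,x_1}(y)\big)\Psi(x_2)+q_s^{t,x_1}(y)\big(\Psi(x_2)-\Psi(x_1)\big).
\]
For the deterministic kernel difference, the mean value theorem together with $|\partial_x q_s^{t,x}(y)|\lesssim \frac{s}{t}\,\big(s(t-s)/t\big)^{-1/2}$ times a Gaussian of doubled variance (using \eqref{heat-id}) gives a bound of order $|x_1-x_2|/\sqrt{t-s}$ times $p_{2s(t-s)/t}$, evaluated at an intermediate point. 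The intermediate point lies within $\frac st|x_1-x_2|\le \sqrt{t-s}$ of the endpoints, so its Gaussian is dominated by the sum of the two endpoint Gaussians, after sacrificing some variance if needed. This contribution, with the moments of $\Psi$ bounded by \eqref{barG}, is $\lesssim (|x_1-x_2|/\sqrt{t-s})\le (|x_1-x_2|/\sqrt{t-s})^\alpha$ times the desired Gaussians.

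The main obstacle is the random term $\Psi(x_2)-\Psi(x_1)$. We need the spatial H\"older regularity of the scaled Green function,
\[
\|\overline{\cG}(t,x_2;s,y)-\overline{\cG}(t,x_1;s,y)\|_{q}\le C\,\big(|x_1-x_2|/\sqrt{t-s}\big)^{\alpha},
\]
uniformly in $s,y$ and for every $\alpha<1/2$. The same estimate is needed for $\overline{\cG}(t,\cdot;0,0)$, with $t\ge t-s$ so that it is even better. To get it, we would use the mild equation \eqref{G-eq} divided by $p_{t-s}(x-y)$. Via \eqref{def-q}, the normalized kernel $p_{t-r}(x-z)p_{r-s}(z-y)/p_{t-s}(x-y)$ becomes the bridge kernel $p_{(t-r)(r-s)/(t-s)}\big(z-y-\frac{r-s}{t-s}(x-y)\big)$. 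We then apply \eqref{BDG} to the difference in $x$, using the moment bounds \eqref{barG}, together with the standard estimate $\int|p_a(z-h)-p_a(z)|^2dz\lesssim a^{-1/2}(|h|^2/a\wedge1)$. The $r$-integral
\[
\int_s^t \Big(\tfrac{(t-r)(r-s)}{t-s}\Big)^{-1/2}\Big(\tfrac{(r-s)^2|x_1-x_2|^2}{(t-s)^2}\cdot\tfrac{t-s}{(t-r)(r-s)}\wedge1\Big)dr
\]
should produce $(|x_1-x_2|^2/(t-s))^{\alpha}$ for any $\alpha<1/2$. The singularity at $r=t$ is what limits $\alpha$ below $1/2$. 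Alternatively, we would try to import this regularity from the literature (e.g. \cite{alberts}, or via shear invariance and the standard H\"older regularity of the SHE).

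Finally, we combine the pieces with generalized H\"older. Negative moments of $\overline{\cG}(t,x;0,0)$ handle the quotient, where we write $1/a-1/b=(b-a)/(ab)$. This gives the random term as $C(|x_1-x_2|/\sqrt{t-s})^\alpha q_s^{t,x_1}(y)$, and \eqref{heat-prop} with $c=2$ converts $q_s^{t,x_1}(y)$ into the stated Gaussian.
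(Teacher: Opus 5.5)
Your proposal is correct and follows the paper's proof. You use the same split of $\cR(t,x_2;s,y)-\cR(t,x_1;s,y)$ into a kernel increment times the bounded random factor, plus $q_s^{t,x}(y)$ times the increment of $\overline{\cG}(t,\cdot;s,y)/\overline{\cG}(t,\cdot;0,0)$, and you control the latter exactly as in Lemma \ref{increm-G} (bridge-kernel mild equation, \eqref{BDG}, the $L^2$ Gaussian increment and a Beta integral, which actually gives an extra factor $(t-s)^{1/4}$ that you may discard). The only deviation is that you obtain the kernel increment from the case split at $|x_1-x_2|=\sqrt{t-s}$ and the mean value theorem, instead of importing Lemma \ref{lem-p2}; in that step, compare the shift $\frac st|x_1-x_2|$ with the Gaussian width $\sqrt{s(t-s)/t}$ rather than with $\sqrt{t-s}$, which holds since $\frac st\sqrt{t-s}\le\sqrt{s(t-s)/t}$, and unimodality then bounds the intermediate value by the two endpoint Gaussians with no loss of variance.
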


\begin{proof}
  By \eqref{def-R}, $\cR(t,x_2;s,y)-\cR(t,x_1;s,y) = T_1+T_2$, where
  \begin{align*}
    T_1 & =\overline{\cG} (s,y;0,0)  \left(\frac{\overline{\cG}(t,x_2;s,y)}{\overline{\cG}(t,x_2;0,0)}-
    \frac{\overline{\cG}(t,x_1;s,y)}{\overline{\cG}(t,x_1;0,0)}\right)q_{s}^{t,x_2}(y)\\
    T_2 & = \overline{\cG} (s,y;0,0)  \frac{\overline{\cG}(t,x_1;s,y)}{\overline{\cG}(t,x_1;0,0)}
      \big(q_{s}^{t,x_2}(y)-q_{s}^{t,x_1}(y) \big).
  \end{align*}
 By Cauchy--Schwarz, \eqref{barG} and \eqref{ineq-G2},
  \[
    \|T_1\|_p\le C_{p,\alpha,T}|x_1-x_2|^\alpha q_s^{t,x_2}(y).
  \]
  Using $1 \le T^{\alpha/2}(t-s)^{-\alpha/2}$ and \eqref{heat-prop} with $c =
  2$ yields
  \[
    \|T_1\|_p\le C_{p,\alpha,T}(t-s)^{-\alpha/2}|x_1-x_2|^\alpha
    p_{2s(t-s)/t}(y-sx_2/t).
  \]
  Similarly, generalized H\"older's inequality, \eqref{barG} and
  Lemma \ref{increm-q} imply
  \[
    \|T_2\|_p\le C_{p,\alpha,T}(t-s)^{-\alpha/2}|x_1-x_2|^\alpha
    \big(p_{2s(t-s)/t}(y-sx_1/t)+p_{2s(t-s)/t}(y-sx_2/t)\big).
  \]
  For $1 \le p < 2$, the same estimates follow by monotonicity of the moment
  norms from the corresponding estimates at larger moment orders. The triangle
  inequality proves the lemma. 
\end{proof}

The next lemma gives an estimate for the $p$-moment of the integrand in
$J_N'(t_1,t_2)$.

\begin{lemma}\label{tAq-increm}
  Let $p \ge 1$ and $\alpha \in (0,\frac{1}{2})$. Let $0 < t_1 < t_2 \le T$ be
  such that $t_2 < 2t_1$. For any $0 < s \le t_1-(t_2-t_1)$ and $u, y \in \bR$,
  \begin{align*}
    & \big\| t_2 \cA(t_2,t_2 u;s,y) \, q_s^{t_2,t_2 u}(y)- t_1 \cA(t_1,t_1 u;s,y) \, q_s^{t_1,t_1 u}(y)\big\|_p \\
    & \quad \le C_{p,\alpha,T}(t_2-t_1)^{\frac{\alpha}{2}} (t_1-s)^{-\frac{\alpha}{2}} \Big(
      p_{\frac{2s(t_1-s)}{t_1}}(y-su)+{p_{\frac{2s(t_2-s)}{t_2}}(y-su)} \Big),
  \end{align*}
\end{lemma}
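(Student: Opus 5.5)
We will write the integrand as a conditional expectation of an $\cR$-increment and then reduce to a spatial increment at the common time $t_1$. By \eqref{def-R1}, the quantity to bound equals $\bE[t_2\cR(t_2,t_2u;s,y)-t_1\cR(t_1,t_1u;s,y)\,|\,\cF_s]$. By conditional Jensen it suffices to bound the $L^p$ norm of the unconditioned difference. We split it as
\[
t_2\big(\cR(t_2,t_2u;s,y)-\cR(t_1,t_1u;s,y)\big)+(t_2-t_1)\cR(t_1,t_1u;s,y).
\]
The second term is handled by \eqref{R-bound}. It is at most $C(t_2-t_1)q_s^{t_1,t_1u}(y)=C(t_2-t_1)p_{s(t_1-s)/t_1}(y-su)$. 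Applying \eqref{heat-prop} with $c=2$ turns this into $p_{2s(t_1-s)/t_1}(y-su)$. We then use $t_2-t_1\le (t_2-t_1)^{\alpha/2}T^{1-\alpha}(t_1-s)^{-\alpha/2}$, which holds because $t_2-t_1\le t_1-s$ under the hypothesis $s\le t_1-(t_2-t_1)$.

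For the first term we use \eqref{diff-R} with $x_2=t_2u$ and $x_1=t_1u$:
\[
\cR(t_2,t_2u;s,y)-\cR(t_1,t_1u;s,y)=\int_{\bR}\big(\cR(t_1,w;s,y)-\cR(t_1,t_1u;s,y)\big)\cR(t_2,t_2u;t_1,w)\,\ud w.
\]
Minkowski and H\"older (exponent $2p$) then bound its $L^p$ norm by $\int\|\cR(t_1,w;s,y)-\cR(t_1,t_1u;s,y)\|_{2p}\,\|\cR(t_2,t_2u;t_1,w)\|_{2p}\,\ud w$. For the first factor we invoke Lemma \ref{lem-R} at time $t_1$, which gives
\[
C(t_1-s)^{-\alpha/2}|w-t_1u|^\alpha\Big(p_{2s(t_1-s)/t_1}(y-\tfrac{s}{t_1}w)+p_{2s(t_1-s)/t_1}(y-su)\Big).
\]
For the second factor, \eqref{R-bound} gives $C\,q_{t_1}^{t_2,t_2u}(w)=C\,p_{t_1(t_2-t_1)/t_2}(w-t_1u)$. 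This Gaussian in $w$ is centered at $t_1u$ with variance $\sigma^2:=t_1(t_2-t_1)/t_2\le t_2-t_1$. The factor $|w-t_1u|^\alpha$ is therefore absorbed, producing $\sigma^\alpha\le (t_2-t_1)^{\alpha/2}$ times a Gaussian of slightly larger variance. Concretely, we use $|z|^\alpha p_\sigma(z)\le C\sigma^{\alpha/2}p_{2\sigma}(z)$, which follows from \eqref{heat-id}.

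It remains to integrate in $w$. The piece containing $p(y-su)$ integrates trivially to a constant. The other piece is a convolution:
\[
\int p_{2s(t_1-s)/t_1}\big(y-\tfrac{s}{t_1}w\big)\,p_{2\sigma^2}(w-t_1u)\,\ud w.
\]
Substituting $w'=sw/t_1$ and using the semigroup property, it equals $\frac{t_1}{s}p_{V}(y-su)$ with
\[
V=\frac{2s(t_1-s)}{t_1}+\frac{2s^2}{t_1^2}\,\sigma^2 .
\]
The main obstacle is the prefactor $t_1/s$, which blows up as $s\to0$. We must show that $\frac{t_1}{s}p_V(y-su)\le C\,p_{2s(t_2-s)/t_2}(y-su)$ or a comparable quantity, and this is where the second Gaussian in the statement enters. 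For that we must check that the Jacobian is actually compensated. Indeed $p_{2s(t_1-s)/t_1}(y-sw/t_1)$ as a function of $w$ integrates to $t_1/s$ against Lebesgue measure, while the probability density in $w$ averages it. So the correct computation is
\[
\int p_a\big(y-\tfrac{s}{t_1}w\big)\,p_{b}(w-t_1u)\,\ud w=p_{a+s^2b/t_1^2}(y-su)
\]
with no $t_1/s$ factor, since the scaling acts on the argument only. With this, $V=2s(t_1-s)/t_1+2s^2\sigma^2/t_1^2$, and we compare $V$ to $2s(t_2-s)/t_2$. A short computation shows these are comparable up to constants, using $\sigma^2=t_1(t_2-t_1)/t_2$, $t_2<2t_1$ and $s\le t_1$. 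Then \eqref{heat-prop} and \eqref{heat-id} give $p_V\le C\big(p_{2s(t_1-s)/t_1}+p_{2s(t_2-s)/t_2}\big)(y-su)$. The variance bookkeeping, keeping all constants uniform in $s$ near $0$ and near $t_1-(t_2-t_1)$, is the step I expect to require the most care.
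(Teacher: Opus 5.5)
Your proposal follows the paper's proof step for step: conditional Jensen via \eqref{def-R1}, the split into $t_2\big(\cR(t_2,t_2u;s,y)-\cR(t_1,t_1u;s,y)\big)$ plus $(t_2-t_1)\cR(t_1,t_1u;s,y)$, and the common-time representation \eqref{diff-R}. After that it uses H\"older at order $2p$ with Lemma~\ref{lem-R} and \eqref{R-bound}, absorbs $|w-t_1u|^\alpha$ via $|z|^\alpha p_t(z)\le C_\alpha t^{\alpha/2}p_{2t}(z)$, and finishes with the scaled Gaussian convolution.

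The one loose end is the step you flag at the end, and it closes by direct algebra. Once you drop the spurious $t_1/s$ Jacobian (your corrected convolution identity is right), the variance is exactly
\[
V=\frac{2s(t_1-s)}{t_1}+\frac{2s^2(t_2-t_1)}{t_1t_2}=\frac{2s\,[\,t_2(t_1-s)+s(t_2-t_1)\,]}{t_1t_2}=\frac{2s(t_2-s)}{t_2}.
\]
So $p_V(y-su)$ is literally the second Gaussian in the statement. No comparison argument and no uniformity in $s$ near $0$ or near $t_1-(t_2-t_1)$ is needed.

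You should state this equality rather than claiming the variances are ``comparable up to constants.'' Two-sided comparability would not suffice: $p_V\le C\,p_{V'}$ fails whenever $V>V'$, because of the heavier Gaussian tail. For the same reason, $V> 2s(t_1-s)/t_1$ rules out bounding this piece by the first Gaussian.

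A minor point: in the bound for $(t_2-t_1)\cR(t_1,t_1u;s,y)$ the constant should be $T$, not $T^{1-\alpha}$, since $(t_2-t_1)^{1-\alpha/2}(t_1-s)^{\alpha/2}\le t_1-s\le T$. This slip is harmless.
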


\begin{proof}
  Let $0 < s \le t_1-(t_2-t_1)$ and $u, y \in \bR$. Using \eqref{def-R1} and
  Jensen's inequality for conditional expectation,
  \begin{align*}
    & \big\|t_2 \cA(t_2,t_2 u;s,y) \, q_s^{t_2,t_2 u}(y)- t_1 \cA(t_1,t_1 u;s,y) \, q_s^{t_1,t_1 u}(y)\big\|_p \\
    & \quad = \big\| \bE\big[t_2 \cR(t_2,t_2u;s,y)-t_1 \cR(t_1,t_1u;s,y)|\cF_s\big]\big\|_p \\
    & \quad \le \big\|t_2 \cR(t_2,t_2u;s,y)-t_1 \cR(t_1,t_1u;s,y)\big\|_p \\
    & \quad \le t_2 \big\|\cR(t_2,t_2u;s,y)-
      \cR(t_1,t_1u;s,y)\big\|_p+(t_2-t_1)\big\|\cR(t_1,t_1u;s,y)\big\|_p=:A_1+A_2.
  \end{align*}

  We examine $A_2$ first. By the choice of $s$,
  $(t_2-t_1)^{1-\frac{\alpha}{2}}(t_1-s)^{\frac{\alpha}{2}} \le t_1-s \le T$,
  and hence
  \[
    t_2-t_1=(t_2-t_1)^{\frac{\alpha}{2}}(t_1-s)^{-\frac{\alpha}{2}}(t_2-t_1)^{1-\frac{\alpha}{2}}
      (t_1-s)^{\frac{\alpha}{2}} \le T (t_2-t_1)^{\frac{\alpha}{2}}(t_1-s)^{-\frac{\alpha}{2}}.
  \]
  Using \eqref{R-bound} and \eqref{heat-prop} with $c = 2$, we see that
  \[
    \big\|\cR(t_1,t_1u;s,y)\big\|_p \le C_{p,T} \, q_{s}^{t_1,t_1 u}(y)=C_{p,T} \,
      p_{\frac{s(t_1-s)}{t_1}}(y-su) \le C_{p,T} \, p_{\frac{2s(t_1-s)}{t_1}}(y-su).
  \]
  Therefore,
  \begin{equation}\label{A2}
    A_2 \le C_{p,T} \, (t_2-t_1)^{\frac{\alpha}{2}}(t_1-s)^{-\frac{\alpha}{2}}p_{\frac{2s(t_1-s)}{t_1}}(y-su).
  \end{equation}

  To study $A_1$, we use \eqref{diff-R}. By Minkowski's and Cauchy-Schwarz
  inequality,
  \begin{align*}
    A_1 & \le T \int_{\bR}\big\| \cR(t_1,w;s,y)-\cR(t_1,t_1 u;s,y) \big\|_{2p} \big\|\cR(t_2,t_2
      u;t_1,w)\big\|_{2p} \ud w.
  \end{align*}
  We bound these factors using Lemma \ref{lem-R}, respectively \eqref{R-bound}.
  It follows that
  \begin{align}
    \nonumber
    A_1 & \le C_{p,\alpha,T} (t_1-s)^{-\frac{\alpha}{2}}  \int_{\bR}  |w-t_1 u|^{\alpha}
    \Big( p_{\frac{2s(t_1-s)}{t_1}}\big(y-\frac{s}{t_1}w\big)+p_{\frac{2s(t_1-s)}{t_1}} (y-su) \Big)\nonumber\\
    &\qquad\qquad\times p_{\frac{t_1(t_2-t_1)}{t_2}}(w-t_1u)\ud w\nonumber \\
    \label{A1}
    & =:C_{p,\alpha,T} (t_1-s)^{-\frac{\alpha}{2}}  (A_1'+A_1'').
  \end{align}

  We study separately $A_1'$ and $A_1''$. We start with $A_1''$. We have:
  \begin{align*}
    A_1''&=  p_{\frac{2s(t_1-s)}{t_1}} (y-su)  \int_{\bR}  |w-t_1 u|^{\alpha}
    p_{\frac{t_1(t_2-t_1)}{t_2}}(w-t_1u)\ud w \\&=C_{\alpha}p_{\frac{2s(t_1-s)}{t_1}}
      (y-su) \left( \frac{t_1(t_2-t_1)}{t_2}\right)^{\frac{\alpha}{2}},
  \end{align*}
  where $C_{\alpha} = \bE|Z|^{\alpha}$ and $Z \sim N(0,1)$. Here we used the
  fact that
  \[
    \int_{\bR}|x|^{\alpha}p_t(x)\ud x=\bE|X|^{\alpha}=t^{\frac{\alpha}{2}}\bE|Z|^{\alpha},
      \quad \mbox{where $X \sim N(0,t)$}.
  \]
  Using the fact that $\frac{t_1}{t_2} \le 1$, it follows that
  \begin{equation}\label{A1-second}
    A_1'' \le C_{\alpha}(t_2-t_1)^{\frac{\alpha}{2}}\, p_{\frac{2s(t_1-s)}{t_1}} (y-su).
  \end{equation}

  To evaluate $A_1'$, we use the fact that for any $\alpha > 0$, $t > 0$ and $x
  \in \bR$,
  \[
    |x|^{\alpha}p_t(x) \le C_{\alpha} t^{\alpha/2} p_{2t}(x),
  \]
  where $C_{\alpha} = \sqrt{2} \left(\frac{2\alpha}{e} \right)^{\alpha/2}$.
  Indeed, we can write
  \[
    |x|^{\alpha}p_t(x) = \sqrt{2}\, t^{\alpha/2}p_{2t}(x) \left(\frac{|x|}{\sqrt{t}}\right)^{\alpha}
      e^{-\frac{x^2}{4t}} \le \sqrt{2} t^{\alpha/2}p_{2t}(x) \cdot K_{\alpha},
  \] where
  $K_{\alpha}:=\sup_{y>0} y^{\alpha} e^{-y^2/4} =
  (\frac{2\alpha}{e})^{\alpha/2}$. It follows that
  \begin{align*}
    A_1' & \le C_{\alpha} \left( \frac{t_1(t_2-t_1)}{t_2} \right)^{\frac{\alpha}{2}} \int_{\bR}
      p_{\frac{2t_1(t_2-t_1)}{t_2}}(w-t_1 u)
      p_{\frac{2s(t_1-s)}{t_1}}\big(y-\frac{s}{t_1}w\big)\ud w.
  \end{align*}

  To evaluate the last integral, we denote
  \[
    A=\frac{2t_1(t_2-t_1)}{t_2}, \quad \quad B=\frac{2s(t_1-s)}{t_1}, \quad \quad a=\frac{s}{t_1}.
  \]
  Note also that $p_t(x-ay) = \frac{1}{a}p_{t/a^2} \big(\frac{x}{a}-y\big)$ for
  any $t, a > 0$ and $x, y \in \bR$. Hence,
  \begin{align*}
    & \int_{\bR} p_A(w-t_1 u) p_B(y-aw) \ud w=\frac{1}{a}  \int_{\bR}
      p_A\Big(\frac{x}{a}-t_1 u\Big) p_B(y-x)\ud x\\
    & \quad =\int_{\bR} p_{a^2 A}(x-at_1 u) p_B(y-x)\ud x=p_{a^2A+B}(y-at_1 u).
  \end{align*}
  It follows that
  \begin{equation}\label{A1-prime}
    A_1' \le C_{\alpha} (t_2-t_1)^{\frac{\alpha}{2}}p_{a^2A+B}(y-su)=C_{\alpha}
      (t_2-t_1)^{\frac{\alpha}{2}}\, p_{\frac{2s(t_2-s)}{t_2}}(y-su),
  \end{equation}
  since
  \[
    a^2A+B=\frac{2s^2 (t_2-t_1)}{t_1t_2}+\frac{2s(t_1-s)}{t_1}=\frac{2s(t_2-s)}{t_2}.
  \]

  The conclusion follows from \eqref{A2}, \eqref{A1}, \eqref{A1-second}
  and \eqref{A1-prime}.
\end{proof}

\begin{proposition}\label{prop-JN'}
  Let $p \ge 2$, $\alpha \in (0,\frac{1}{2})$ and $N \ge 2$. For any $0 <
  \varepsilon \le t_1 < t_2 \le T$ such that $t_2 -t_1 \le
  \frac{\varepsilon}{2}$,
  \[
    \bE|J_N'(t_1,t_2)|^p \le C_{p,\alpha,\varepsilon,T}(t_2-t_1)^{\alpha p/2}.
  \]
\end{proposition}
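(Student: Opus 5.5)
Set $\delta=t_2-t_1\le\varepsilon/2$, so that $t_2<2t_1$ (because $t_1\ge\varepsilon$). This is exactly the hypothesis needed for Lemma \ref{tAq-increm}. I would split the $s$-integral in $J_N'(t_1,t_2)$ into a bulk part over $[0,t_1-\delta]$ and a boundary layer over $[t_1-\delta,t_1]$. Both pieces are estimated as in Proposition \ref{IN}, using \eqref{BDG} and Minkowski's inequality in the $u$-integral:
\[
\|J_N'\|_p^2\le \frac{z_p^2}{N\log N}\int_0^{t_1}\int_{\bR}\Big(\int_0^{N/t_2}\big\|t_2\cA(t_2,t_2u;s,y)q_s^{t_2,t_2u}(y)-t_1\cA(t_1,t_1u;s,y)q_s^{t_1,t_1u}(y)\big\|_p\,\ud u\Big)^2\ud y\,\ud s .
\]

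\emph{Bulk region} $s\le t_1-\delta$. Lemma \ref{tAq-increm} bounds the integrand by $C(\delta)^{\alpha/2}(t_1-s)^{-\alpha/2}$ times the sum $p_{2s(t_1-s)/t_1}(y-su)+p_{2s(t_2-s)/t_2}(y-su)$. Squaring this sum and using $(a+b)^2\le 2a^2+2b^2$, each resulting term has the same form as the integrals treated in Proposition \ref{JN-far}: a heat kernel of variance comparable to $s(t_i-s)$, evaluated at $y-su$, with $u\in[0,N/t_2]$. Lemma \ref{pcs-lem}, or the semigroup computation \eqref{q-int}, then bounds the $\ud y$-integral by $C_{\varepsilon,T}\big(\frac{N}{s}\wedge \frac{N^2}{\sqrt{s(t_i-s)}}\big)$. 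The bulk contribution is therefore at most
\[
\frac{C\,\delta^{\alpha}}{N\log N}\int_0^{t_1-\delta}(t_1-s)^{-\alpha}\Big(\frac Ns\wedge\frac{N^2}{\sqrt{s(t_1-s)}}\Big)\ud s .
\]
To evaluate it, split at $s=t_1/2$. On $[0,t_1/2]$ the weight $(t_1-s)^{-\alpha}$ is at most $(\varepsilon/2)^{-\alpha}$, and the computation from Proposition \ref{JN-far} gives $CN\log N$. On $[t_1/2,t_1-\delta]$, bound the integrand by $\frac{2N}{t_1}(t_1-s)^{-\alpha}$; this is integrable because $\alpha<1$, and gives $CN$. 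Altogether the bulk part is $\le C\delta^{\alpha}$.

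\emph{Boundary layer} $s\in[t_1-\delta,t_1]$. Here Lemma \ref{tAq-increm} does not apply, so I would bound the two terms separately, exactly as in Proposition \ref{JN-far}, using Lemma \ref{A-bound}. This gives
\[
\frac{C}{N\log N}\int_{t_1-\delta}^{t_1}\frac{N}{s}\,\ud s\le \frac{C\delta}{\varepsilon\log N}\le C\delta\le C\delta^{\alpha},
\]
where the first bound uses the estimate $N/s$ from \eqref{q-int}, and the last step uses $\delta\le T$. Combining the bulk and boundary contributions gives $\|J_N'\|_p^2\le C\delta^{\alpha}$, and raising to the power $p/2$ yields the claim.

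The main obstacle is the bulk integral: the singular weight $(t_1-s)^{-\alpha}$ must not destroy the $\log N$ gain. I expect it does not, since near $s=t_1$ the cheap bound $N/s$ suffices and only the region near $s=0$ produces the $\log N$. One point to check is that the second kernel, of variance $2s(t_2-s)/t_2$, satisfies the same Lemma \ref{pcs-lem} bound. It does, because $t_2-s\ge t_1-s$ and $s\le t_1\le t_2$.
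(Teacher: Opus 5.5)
Your proposal is correct and follows the paper's proof essentially step for step. You use the same split of the $s$-integral at $t_1-(t_2-t_1)$, then Lemma \ref{tAq-increm} with Lemma \ref{pcs-lem} on the bulk, and Lemma \ref{A-bound} with the $N/s$ bound on the boundary layer; the only cosmetic difference is that you split the bulk integral at $t_1/2$ rather than at $\varepsilon/2$, which works equally well because $t_1-\delta\ge t_1/2\ge\varepsilon/2$.
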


\begin{proof}
  As in the proof of Proposition \ref{IN}, by \eqref{BDG} and Minkowski's
  inequality,
  \begin{align*}
    & \|J_N'(t_1,t_2)\|_p^2\\
    &\quad\le\frac{z_p^2}{N\log N}\int_0^{t_1}\int_{\bR}
    \left(\int_0^{N/t_2}
    \left\|\begin{aligned}
      &t_2\cA(t_2,t_2u;s,y)q_s^{t_2,t_2u}(y)\\
      &{}-t_1\cA(t_1,t_1u;s,y)q_s^{t_1,t_1u}(y)
    \end{aligned}\right\|_p \ud u\right)^2\ud y\,\ud s.
  \end{align*}
  We write the integral on $[0, t_1]$ as the sum of integrals on $[0,
  t_1-(t_2-t_1)]$ and $[t_1-(t_2-t_1), t_1]$, and we denote the two terms by
  $T_1$ and $T_2$, so that
  $$
    \|J_N'(t_1,t_2)\|_p^2  \le z_p^2 (T_1+T_2).
  $$

  We examine $T_1$. Note that $t_2 \le t_1 + \frac{\varepsilon}{2} < 2t_1$.
  Hence, we can apply Lemma \ref{tAq-increm} choosing $\alpha \in
  (0,\frac{1}{2})$. Using the inequality $(a+b)^2 \le 2(a^2+b^2)$, it follows
  that
  \begin{align*}
    T_1 & \le C_{p,\alpha,\varepsilon,T} \frac{(t_2-t_1)^{\alpha}}{N \log N}
      \left\{\int_{0}^{t_1-(t_2-t_1)}(t_1-s)^{-\alpha} \int_{\bR}\left[ \int_0^{N/t_2}
      p_{\frac{2s(t_1-s)}{t_1}}(y-su)\ud u \right ]^2 \ud y\ud s
      \right.\\
    & \qquad \qquad \quad \quad \quad \quad \ \ \ +  \left.
      \int_{0}^{t_1-(t_2-t_1)}(t_1-s)^{-\alpha} \int_{\bR}\left[ \int_0^{N/t_2}  {
      p_{\frac{2s(t_2-s)}{t_2}}(y-su)} \ud u \right ]^2 \ud y\ud s
      \right\}\\
    & \le C_{p,\alpha,\varepsilon,T} \frac{(t_2-t_1)^{\alpha}}{N \log N} \int_{0}^{t_1-(t_2-t_1)}(t_1-s)^{-\alpha}
    \left( \frac{N}{s} \wedge \frac{N^2}{\sqrt{s}}\right)\ud s.
  \end{align*}
  For the last line, positivity permits enlargement of the first
  spatial integration interval from $[0, N/t_2]$ to $[0, N/t_1]$; then
  Lemma \ref{pcs-lem} applies to both kernels.

  Note that $t_1-(t_2-t_1) \ge \frac{\varepsilon}{2}$. We split the integral above as the sum of two
  integrals on $[0, \frac{\varepsilon}{2}]$ and $[\frac{\varepsilon}{2},
  t_1-(t_2-t_1)]$. (The second
  interval may be empty.) We denote these two integrals by $A$ and $B$,
  respectively. On the first interval, $t_1-s \ge \varepsilon/2$.
  Hence,
  \begin{align*}
    A & \le \left(\frac{\varepsilon}{2}\right)^{-\alpha}
    \left(\int_0^{\min\{\varepsilon/2,N^{-2}\}}\frac{N^2}{\sqrt{s}}\,\ud s
    +1_{\{\varepsilon/2>N^{-2}\}}
    \int_{N^{-2}}^{\varepsilon/2}\frac{N}{s}\,\ud s\right) \\
    & \le \left(\frac{\varepsilon}{2} \right)^{-\alpha}\left(2N
      + 1_{\{\frac{\varepsilon}{2}>N^{-2}\}}N  \log \frac{\varepsilon N^2}{2}\right) \le
      C_{\alpha,\varepsilon}N \log N \\
    B & \le \int_{\varepsilon/2}^{t_1-(t_2-t_1)}(t_1-s)^{-\alpha}
    \frac{N}{s} \ud s \le
      \frac{2N}{\varepsilon}\int_{\varepsilon/2}^{t_1-(t_2-t_1)}(t_1-s)^{-\alpha} \ud s
      \le \frac{2N}{\varepsilon} \cdot \frac{t_1^{1-\alpha}}{1-\alpha} \le
      C_{\alpha,\varepsilon,T}N.
  \end{align*}
  Therefore,
  \[
    T_1 \le C_{p,\alpha,\varepsilon,T}(t_2-t_1)^{\alpha}.
  \]

  For $T_2$, we use the inequality $\|X+Y\|_{p} \le \|X\|_p+\|Y\|_p$ followed
  by Lemma \ref{A-bound}:
  \begin{align*}
    T_2 &\le \frac{C_{p,T}}{N\log N}
    \sum_{i=1}^2\int_{t_1-(t_2-t_1)}^{t_1}\int_{\bR}
    \left(\int_0^{N/t_2}q_s^{t_i,t_i u}(y)\,\ud u\right)^2\ud y\,\ud s.
  \end{align*}
  We now use again Lemma \ref{pcs-lem} with $c = 1$. We obtain:
  \begin{align*}
    T_2 & \le \frac{C_{p,\varepsilon,T}}{N \log N}\int_{t_1-(t_2-t_1)}^{t_1}\left( \frac{N}{s}
      \wedge \frac{N^2}{\sqrt{s}}\right)\ud s \\
    & \le \frac{C_{p,\varepsilon,T}}{\log N}\int_{t_1-(t_2-t_1)}^{t_1} \frac{1}{s}\ud s
      \le \frac{C_{p,\varepsilon,T}}{\log N} \cdot  \frac{2}{\varepsilon}(t_2-t_1) \le
      C_{p,\varepsilon,T} (t_2-t_1).
  \end{align*}
  For the last line, we used the fact that $s \ge t_1-(t_2-t_1) \ge
  \varepsilon-\frac{\varepsilon}{2} = \frac{\varepsilon}{2}$. 
  Combining the estimates for $T_1$ and $T_2$, we obtain
  \[
    \|J_N'(t_1,t_2)\|_p^2
    \le C_{p,\alpha,\varepsilon,T}
    \big((t_2-t_1)^\alpha+(t_2-t_1)\big)
    \le C_{p,\alpha,\varepsilon,T}(t_2-t_1)^\alpha.
  \]
  Indeed, $t_2-t_1 \le T$ and $0 < \alpha < 1$, so $t_2-t_1 \le
  T^{1-\alpha}(t_2-t_1)^\alpha$. Raising this bound to the power $p/2$ proves
  the proposition.
\end{proof}

\medskip

\begin{proof}[Proof of Theorem \ref{T:FCLT}]
  As mentioned at the beginning of the proof of Section \ref{section-FCLT}, it
  is enough to prove that \eqref{main-est} holds. This follows from
  decompositions \eqref{XN-dec} and \eqref{dec-JN2}, using: a)
  Proposition \ref{IN} for the term $I_N(t_1,t_2)$; b) Proposition \ref{JN-far}
  for the term $J_N(t_1,t_2)$ when $t_2-t_1 > \varepsilon/2$; c)
  Proposition \ref{prop-JN''} for the term $J_N''(t_1,t_2)$ (the restriction
  $t_2-t_1 \le \varepsilon/2$ is not needed); d)
  Proposition \ref{prop-JN'} for the term $J_N'(t_1,t_2)$ when $t_2-t_1 \le
  \varepsilon/2$. Most of the effort is dedicated to d), which is based
  on Lemma \ref{tAq-increm}, which in turn uses Lemma \ref{lem-R}.
\end{proof}


\section*{Acknowledgements}

R.~M.~B. was supported by a grant from the Natural Sciences and
Engineering Research Council of Canada. L.~C. was partially supported by
NSF grant DMS-2443823 and a Collaboration Grant for Mathematicians
(\#959981) from the Simons Foundation. J.~W. was partially supported by a Mitacs
Globalink Research Award Abroad (IT52612), which funded his visit to Auburn
University.

\appendix

\section{Auxiliary results}
\label{S:appendix}

\subsection{Properties of \texorpdfstring{$p_t(x)$ and $q_s^{t,x}(y)$}{p and q}}
\label{S:heat-kernel-estimates}

In this section, we include some auxiliary results about the increments of $p$
and $q$.

\begin{lemma}\label{lem-p1}
  Let $\alpha \in (0,1]$ be arbitrary. For any $t > 0$ and $x, y \in \bR$,
  \[
    \int_{\bR}\big|p_t(x-z)-p_t(y-z)\big|^2 \ud z \le \frac{4^{-\alpha}}{\sqrt{\pi}} \,
      t^{-\frac{1}{2}-\alpha}|x-y|^{2\alpha}.
  \]
\end{lemma}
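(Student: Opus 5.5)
Two routes are natural here: Plancherel in the spatial variable, or interpolating between a trivial bound and a Lipschitz bound. I would use Plancherel, because it produces the sharp constant $4^{-\alpha}/\sqrt{\pi}$ directly.

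\textbf{Step 1: reduce to a one-dimensional integral.} Set $h=x-y$. By translation invariance the left-hand side equals
\[
\int_{\bR}|p_t(z+h)-p_t(z)|^2\,\ud z=\frac{1}{2\pi}\int_{\bR}e^{-t\xi^2}\,|e^{i\xi h}-1|^2\,\ud \xi ,
\]
using $\widehat{p_t}(\xi)=e^{-t\xi^2/2}$ and Plancherel.

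\textbf{Step 2: bound the oscillating factor.} Write $|e^{i\xi h}-1|^2=4\sin^2(\xi h/2)$. For $\alpha\in(0,1]$ we have $|\sin \theta|\le \min(1,|\theta|)\le|\theta|^\alpha$. Hence
\[
4\sin^2(\xi h/2)\le 4\,|\xi h/2|^{2\alpha}=4^{1-\alpha}|\xi|^{2\alpha}|h|^{2\alpha}.
\]

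\textbf{Step 3: evaluate the Gaussian moment.} The remaining integral is
\[
\int_{\bR}e^{-t\xi^2}|\xi|^{2\alpha}\,\ud\xi=\Gamma\!\big(\alpha+\tfrac12\big)\,t^{-\alpha-\frac12}.
\]
So the left-hand side is at most
\[
\frac{4^{1-\alpha}}{2\pi}\,\Gamma\!\big(\alpha+\tfrac12\big)\,t^{-\frac12-\alpha}|h|^{2\alpha}.
\]

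\textbf{Step 4: match the stated constant.} It remains to check
\[
\frac{4\,\Gamma(\alpha+\frac12)}{2\pi}\le\frac{1}{\sqrt\pi},
\quad\text{i.e.}\quad \Gamma\!\big(\alpha+\tfrac12\big)\le\frac{\sqrt\pi}{2}.
\]
Since $\Gamma$ is log-convex on $[\tfrac12,\tfrac32]$, its maximum there is attained at an endpoint. The endpoint values are $\Gamma(\tfrac12)=\sqrt\pi$ and $\Gamma(\tfrac32)=\sqrt\pi/2$.

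This shows the constant is \emph{not} automatic for small $\alpha$. For example, $\alpha=0$ would give $\sqrt\pi>\sqrt\pi/2$. Checking directly at $\alpha=0$: the left-hand side is at most $2\|p_t\|_2^2=2(4\pi t)^{-1/2}=(\pi t)^{-1/2}$, while the stated right-hand side is $(\pi t)^{-1/2}$, so the bound holds with equality there.

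The constant step is therefore the main obstacle. The crude Step 2 bound loses too much for $\alpha<1$ when $|\xi h|$ is large. I would handle it as follows:
\begin{itemize}
\item[(a)] Compute the left-hand side exactly: it equals $2(4\pi t)^{-1/2}\big(1-e^{-h^2/(4t)}\big)$.
\item[(b)] Set $r=h^2/(4t)$. The claim becomes
\[
1-e^{-r}\le r^\alpha \qquad\text{for all } r>0 .
\]
\item[(c)] This holds since $1-e^{-r}\le\min(1,r)\le r^\alpha$ for $\alpha\in(0,1]$.
\item[(d)] Substituting back, $(\pi t)^{-1/2}r^\alpha=(\pi t)^{-1/2}\,4^{-\alpha}t^{-\alpha}h^{2\alpha}$, which is exactly the stated bound.
\end{itemize}

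So the final plan is to use this exact computation via the semigroup property, $\int p_t(z+h)p_t(z)\,\ud z=p_{2t}(h)$, rather than Fourier analysis. It gives
\[
2\big(p_{2t}(0)-p_{2t}(h)\big)=2(4\pi t)^{-1/2}\big(1-e^{-h^2/(4t)}\big),
\]
followed by the elementary inequality in (c).
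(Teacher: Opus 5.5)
Your final argument is correct and is exactly the paper's proof: the exact evaluation $\int_{\bR}|p_t(x-z)-p_t(y-z)|^2\,\ud z=2\big(p_{2t}(0)-p_{2t}(x-y)\big)=(\pi t)^{-1/2}\big(1-e^{-|x-y|^2/(4t)}\big)$ via the semigroup property, followed by $1-e^{-r}\le 1\wedge r\le r^{\alpha}$. You were also right to drop the Fourier route with the pointwise bound $\sin^2\theta\le|\theta|^{2\alpha}$, since it only yields the constant $4^{1-\alpha}\Gamma(\alpha+\tfrac12)/(2\pi)$, which exceeds $4^{-\alpha}/\sqrt{\pi}$ for small $\alpha$.
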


\begin{proof}
  By the semigroup property of the heat kernel, we obtain
  \begin{align*}
    \int_{\bR}\big|p_t(x-z)-p_t(y-z)\big|^2 \ud z
    =2p_{2t}(0)-2p_{2t}(x-y)
    =\frac{1}{\sqrt{\pi t}}
    \left(1-\exp\left(-\frac{|x-y|^2}{4t}\right)\right).
  \end{align*}
  For any $r \ge 0$ and $\alpha \in (0,1]$, we have $1-e^{-r} \le 1\wedge r
  \le r^{\alpha}$. Therefore,
  \[
    \int_{\bR}\big|p_t(x-z)-p_t(y-z)\big|^2 \ud z
    \le \frac{1}{\sqrt{\pi t}}
    \left(\frac{|x-y|^2}{4t}\right)^{\alpha}
    =\frac{4^{-\alpha}}{\sqrt{\pi}}\,
    t^{-\frac12-\alpha}|x-y|^{2\alpha}.
  \]
\end{proof}

\begin{lemma}\label{lem-p2}
  Let $\alpha \in (0,1]$. For any $t > 0$ and $x, y, z \in \bR$,
  \[
    \big|p_{t}(x-z)-p_t(y-z)\big| \le C_{\alpha} \, t^{-\frac{\alpha}{2}}|x-y|^{\alpha}\big(
      p_{2t}(x-z)+p_{2t}(y-z)\big).
  \]
\end{lemma}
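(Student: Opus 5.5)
Without loss of generality $z=0$ (replace $x,y$ by $x-z,y-z$). The proof has three steps: a mean value bound that gives a Lipschitz estimate, a trivial bound used for large separations, and an interpolation between them.

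\textbf{Step 1 (Lipschitz bound).} We have $\partial_x p_t(x)=-\frac{x}{t}p_t(x)$. By the inequality $|x|^{\beta}p_t(x)\le C_\beta t^{\beta/2}p_{2t}(x)$ with $\beta=1$, used in the proof of Lemma \ref{tAq-increm}, this gives $|\partial_x p_t(x)|\le C t^{-1/2}p_{2t}(x)$. The mean value theorem then yields
\[
|p_t(x)-p_t(y)|\le C t^{-1/2}|x-y|\, p_{2t}(\theta)
\]
for some $\theta$ between $x$ and $y$. The remaining task is to bound $p_{2t}(\theta)$ by $p_{2t}(x)+p_{2t}(y)$. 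The function $p_{2t}$ is even and decreasing in $|\cdot|$, so:
\begin{itemize}
\item if $x,y$ have the same sign, then $|\theta|\ge\min(|x|,|y|)$, hence $p_{2t}(\theta)\le\max(p_{2t}(x),p_{2t}(y))$;
\item if $x,y$ have opposite signs, $\theta$ may be near $0$, which is the delicate case.
\end{itemize}

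\textbf{Step 2 (the opposite-sign case).} I would avoid the issue by a different splitting, which is the step I expect to need care. Consider two regimes.
\begin{itemize}
\item If $|x-y|\ge\sqrt t$, use the trivial bound $|p_t(x)-p_t(y)|\le p_t(x)+p_t(y)\le\sqrt2\,(p_{2t}(x)+p_{2t}(y))$ by \eqref{heat-prop} with $c=2$. Since $|x-y|^\alpha t^{-\alpha/2}\ge 1$ here, this already gives the claim.
\item If $|x-y|<\sqrt t$ and the signs are opposite, then $|x|,|y|\le|x-y|<\sqrt t$. Hence $p_{2t}(x)\ge p_{2t}(\sqrt t)=e^{-1/4}(4\pi t)^{-1/2}\ge e^{-1/4}p_{2t}(\theta)$, and Step 1 gives the Lipschitz bound with $p_{2t}(x)$ on the right, up to a constant.
\end{itemize}
So in every case with $|x-y|<\sqrt t$ we obtain
\[
|p_t(x)-p_t(y)|\le C t^{-1/2}|x-y|\,\big(p_{2t}(x)+p_{2t}(y)\big).
\]

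\textbf{Step 3 (interpolation).} For $|x-y|<\sqrt t$ we have $t^{-1/2}|x-y|=(t^{-1/2}|x-y|)^\alpha(t^{-1/2}|x-y|)^{1-\alpha}\le t^{-\alpha/2}|x-y|^\alpha$ for $\alpha\in(0,1]$. Combining with the trivial regime $|x-y|\ge\sqrt t$ from Step 2, the two cases together give the lemma with a constant $C_\alpha$ (in fact independent of $\alpha$).
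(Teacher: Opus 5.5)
Your proof is correct. There is nothing in the paper to compare it against step by step, because the paper does not prove this lemma; it only refers to Lemma~3.1 of Chen--Huang. Your argument therefore stands as a self-contained replacement, and each step checks out:
\begin{itemize}
  \item $|\partial_x p_t(x)|=\frac{|x|}{t}p_t(x)\le 2e^{-1/2}t^{-1/2}p_{2t}(x)$, by the weighted Gaussian bound from the proof of Lemma~\ref{tAq-increm}.
  \item For opposite signs, $|x-y|=|x|+|y|$. So $|x-y|<\sqrt t$ forces $p_{2t}(\theta)\le p_{2t}(0)\le e^{1/4}p_{2t}(x)$.
  \item The regime $|x-y|\ge\sqrt t$ is exactly \eqref{heat-prop} with $c=2$.
  \item The interpolation $r\le r^\alpha$ for $r<1$ finishes the argument.
\end{itemize}
The result holds with the $\alpha$-independent constant $2e^{-1/4}$, which is slightly more than the paper records. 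Your proof also uses only facts already in the paper, whereas the citation is shorter but opaque.

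If you want to avoid tracking where the mean-value point $\theta$ falls, there is an even shorter route. Factor $e^{-a}-e^{-b}=(e^{-a/2}-e^{-b/2})(e^{-a/2}+e^{-b/2})$ to get the exact identity
\[
  p_t(x)-p_t(y)=\sqrt2\,\big(g(x/\sqrt t)-g(y/\sqrt t)\big)\big(p_{2t}(x)+p_{2t}(y)\big),\qquad g(u)=e^{-u^2/4}.
\]
Since $0<g\le 1$ and $|g'|\le 1$, we have $|g(u)-g(v)|\le\min\{1,|u-v|\}\le|u-v|^\alpha$. This gives the lemma with $C_\alpha=\sqrt2$. This version produces the factor $p_{2t}(x)+p_{2t}(y)$ directly and carries over verbatim to $\bR^d$, with constant $2^{d/2}$. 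Your sign dichotomy is one-dimensional as phrased, but that is all the paper needs.
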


\begin{proof} {We refer to the proof of Lemma 3.1 of \cite{chen-huang19}.}
  
\end{proof}

Recall definition \eqref{def-q} of $q$. The next result gives an estimate for
the increments of $q$.

\begin{lemma}\label{increm-q}
  Let $\alpha \in (0,1]$. For any $0 < s < t$ and $x_1, x_2, y \in \bR$,
  \[
    \big|q_{s}^{t,x_2}(y)-q_{s}^{t,x_1}(y)\big| \le C_{\alpha} (t-s)^{-\frac{\alpha}{2}} |x_1-x_2|^{\alpha}
    \Big( p_{\frac{2s(t-s)}{t}}\big(y-\frac{s}{t}x_1\big)+p_{\frac{2s(t-s)}{t}}\big(y-\frac{s}{t}x_2\big) \Big).
  \]
\end{lemma}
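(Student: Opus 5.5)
We want to bound $|q_s^{t,x_2}(y)-q_s^{t,x_1}(y)|$. By definition \eqref{def-q}, $q_s^{t,x}(y)=p_{\sigma}(y-\frac{s}{t}x)$ with $\sigma=\frac{s(t-s)}{t}$. The two values are therefore the same heat kernel $p_\sigma$, evaluated at the two points $y-\frac{s}{t}x_1$ and $y-\frac{s}{t}x_2$. This is exactly the situation of Lemma \ref{lem-p2}.

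Apply Lemma \ref{lem-p2} with time $\sigma$, spatial points $\frac{s}{t}x_1,\frac{s}{t}x_2$, and $z=y$, using the evenness of $p$. This gives
\[
|q_s^{t,x_2}(y)-q_s^{t,x_1}(y)|\le C_\alpha\,\sigma^{-\frac{\alpha}{2}}\Big(\frac{s}{t}\Big)^{\alpha}|x_1-x_2|^{\alpha}\Big(p_{2\sigma}\big(y-\tfrac{s}{t}x_1\big)+p_{2\sigma}\big(y-\tfrac{s}{t}x_2\big)\Big).
\]
The Gaussian factors are already in the required form, since $2\sigma=\frac{2s(t-s)}{t}$.

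It remains to simplify the prefactor:
\[
\sigma^{-\alpha/2}(s/t)^{\alpha}=\Big(\frac{t}{s(t-s)}\Big)^{\alpha/2}\frac{s^\alpha}{t^\alpha}=\Big(\frac{s}{t}\Big)^{\alpha/2}(t-s)^{-\alpha/2}\le (t-s)^{-\alpha/2},
\]
because $0<s<t$. This is exactly the claimed bound, with a constant depending only on $\alpha$ and not on $T$.

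There is no real obstacle here. The only substantive point is that the proof relies on Lemma \ref{lem-p2} (the Chen–Huang pointwise Hölder bound for the heat kernel). If one wanted to make the argument self-contained, one could prove that lemma directly via the identity \eqref{heat-id}: the mean value theorem gives $|p_t(a)-p_t(b)|\le |a-b|\sup|\partial_x p_t|$ with $|\partial_xp_t(x)|\le Ct^{-1/2}p_{2t}(x)$; this is combined with the trivial bound $|p_t(a)-p_t(b)|\le p_t(a)+p_t(b)\le \sqrt2(p_{2t}(a)+p_{2t}(b))$ and the two are interpolated. The one delicate point is that the supremum lies over the segment between $a$ and $b$. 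When $|a-b|\le\sqrt t$, the values of $p_{2t}$ on the segment are comparable to those at the endpoints, up to a change of variance, which is handled by \eqref{heat-prop}. When $|a-b|>\sqrt t$, the trivial bound already suffices.
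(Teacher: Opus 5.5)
Your proof is correct and is exactly the paper's argument: apply Lemma~\ref{lem-p2} with time $\frac{s(t-s)}{t}$ at the points $\frac{s}{t}x_1,\frac{s}{t}x_2$ (with $z=y$), then simplify the prefactor to $(s/t)^{\alpha/2}(t-s)^{-\alpha/2}\le (t-s)^{-\alpha/2}$. One small correction to your optional self-contained sketch of Lemma~\ref{lem-p2}: \eqref{heat-prop} only lets you increase the variance, so it cannot turn an endpoint bound involving $p_{4t}$ back into $p_{2t}$; you do not need it, though, because $p_{2t}$ is monotone on each half-line, and if the segment straddles $0$ then $|a|,|b|\le|a-b|\le\sqrt t$ gives $p_{2t}(0)\le e^{1/4}p_{2t}(a)$.
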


\begin{proof}
  We apply Lemma \ref{lem-p2}.
  It follows that
  \begin{align*}
    &\big|q_s^{t,x_2}(y)-q_s^{t,x_1}(y)\big| = \left|p_{\frac{s(t-s)}{t}} \Big(y-\frac{s}{t}x_1
      \Big)-p_{\frac{s(t-s)}{t}} \Big(y-\frac{s}{t}x_2 \Big)\right| \\
    &\quad\le C_{\alpha}
    \left(\frac{s(t-s)}{t}\right)^{-\frac{\alpha}{2}}
    \left(\frac{s}{t}|x_1-x_2|\right)^{\alpha}
    \left(
    p_{\frac{2s(t-s)}{t}}\left(y-\frac{s}{t}x_1\right)
    +p_{\frac{2s(t-s)}{t}}\left(y-\frac{s}{t}x_2\right)
    \right).
  \end{align*}
  The conclusion follows since
  \[
    \left(\frac{s(t-s)}{t}\right)^{-\frac{\alpha}{2}}
    \left(\frac{s}{t}\right)^{\alpha}
    =\left(\frac{s}{t}\right)^{\frac{\alpha}{2}}
    (t-s)^{-\frac{\alpha}{2}}
    \le(t-s)^{-\frac{\alpha}{2}}.
  \]
\end{proof}

\begin{lemma}\label{pcs-lem}
  Let $c > 0$ and $N \ge 1$. For any $0 < \varepsilon \le t_1 < t_2 \le T$
  and $s \in (0,t_1)$,
  \[
    \int_{\bR} \left(\int_{0}^{N/t_i} p_{\frac{cs (t_i-s)}{t_i}}(y-su)\ud u \right)^2
      \ud y \le C_{c,\varepsilon, T} \left( \frac{N}{s} \wedge
      \frac{N^2}{\sqrt{s}}\right), \quad \mbox{for} \ i=1,2.
  \]
\end{lemma}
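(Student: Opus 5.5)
Write $\sigma^2 := \frac{cs(t_i-s)}{t_i}$. Expanding the square and integrating in $y$ first, the plan is to use the semigroup property exactly as in the proofs of \eqref{q-int} and \eqref{ell-int}. This gives
\[
  \int_{\bR}\left(\int_0^{N/t_i} p_{\sigma^2}(y-su)\,\ud u\right)^2\ud y
  =\int_0^{N/t_i}\int_0^{N/t_i} p_{2\sigma^2}\big(s(u-v)\big)\,\ud u\,\ud v .
\]
All subsequent estimates are applied to this double integral. Throughout, $i\in\{1,2\}$ and $t_i \ge \varepsilon$, so $N/t_i \le N/\varepsilon$.

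For the first bound, fix $v$ and integrate in $u$ over all of $\bR$. The change of variables $w=s(u-v)$ gives $\int_{\bR}p_{2\sigma^2}(s(u-v))\,\ud u = 1/s$. Integrating over $v\in[0,N/t_i]$ then yields the bound $N/(t_i s) \le \varepsilon^{-1}N/s$.

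For the second bound, bound the kernel pointwise: $p_{2\sigma^2}(\cdot)\le (4\pi\sigma^2)^{-1/2}$. The square has area $(N/t_i)^2\le N^2/\varepsilon^2$, so the double integral is at most $C_{c,\varepsilon}N^2(s(t_i-s)/t_i)^{-1/2}$.

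The only delicate point is the factor $(t_i-s)^{-1/2}$, which blows up as $s\uparrow t_1$ when $i=1$. The stated bound $N^2/\sqrt{s}$ has no such singularity, so the crude sup bound is insufficient near $s=t_i$ and I plan to split into two cases.

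\emph{Case $s\le t_i/2$.} Here $(t_i-s)/t_i\ge 1/2$, so $\sigma^2\ge cs/2$. The sup bound then gives $C_{c,\varepsilon}N^2/\sqrt{s}$ directly.

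\emph{Case $s> t_i/2$.} Here $s\ge \varepsilon/2$ and $s\le T$. In this regime the first bound gives $C_\varepsilon N/s\le C_{\varepsilon}N \le C_{\varepsilon,T}N^2/\sqrt{s}$, using $N\ge1$ and $\sqrt{s}\le \sqrt{T}$. So the first bound alone controls both quantities.

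Combining the two cases, the double integral is bounded by $C_{c,\varepsilon,T}\,N/s$ always and by $C_{c,\varepsilon,T}\,N^2/\sqrt{s}$ always, hence by the minimum. The main (minor) obstacle is exactly this endpoint singularity in $t_i-s$; the case split above removes it, so no sharper kernel estimate should be needed.
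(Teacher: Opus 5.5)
Your proof is correct and follows essentially the same route as the paper. You use the semigroup reduction to the double integral of $p_{2\sigma^2}(s(u-v))$, integrate one variable over $\bR$ for the $N/s$ bound, bound the kernel by its supremum for small $s$, and reuse the $N/s$ bound when $s$ is bounded below. The only difference is cosmetic: you split at $s=t_i/2$ (using $(t_i-s)/t_i\ge 1/2$), whereas the paper splits at $s=\varepsilon/2$ (using $t_i-s>\varepsilon/2$ and $t_i\le T$).
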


\begin{proof}
  We denote the integral on the left side by $I$. Using the semigroup property,
  for $i = 1, 2$,
  \begin{align}
    \nonumber
    I=\int_0^{N/t_i}\int_0^{N/t_i}
    p_{\frac{2cs(t_i-s)}{t_i}}\big(s(u-v)\big)\ud u\,\ud v
    &\le\int_0^{N/t_i}\int_{\bR}
    p_{\frac{2cs(t_i-s)}{t_i}}\big(s(u-v)\big)\ud v\,\ud u\\
    \label{bound1}
    &=\frac{N}{t_i s} \le\frac{N}{\varepsilon s}.
  \end{align}

  This gives the first bound. To obtain the second bound, suppose first that $s
  \in (0,\frac{\varepsilon}{2})$. Then $t_2-s > t_1-s >
  t_1-\frac{\varepsilon}{2} \ge \frac{\varepsilon}{2}$. Bounding the integrand
  by its maximum, we have:
  \[
    p_{\frac{2cs(t_i-s)}{t_i}}\big(s(u-v)\big) \le
      p_{\frac{2cs(t_i-s)}{t_i}}(0)=\frac{1}{\sqrt{4\pi c}} \sqrt{\frac{t_i}{s(t_i-s)}} \le
      \frac{1}{\sqrt{4\pi c}}  \sqrt{\frac{T}{\varepsilon/2}} \cdot \frac{1}{\sqrt{s}},
  \]
  and
  \[
    I \le \left( \frac{N}{t_i}\right)^2 \frac{1}{\sqrt{4\pi c}}  \sqrt{\frac{T}{\varepsilon/2}}
      \cdot \frac{1}{\sqrt{s}} \le  \left( \frac{N}{\varepsilon}\right)^2 \frac{1}{\sqrt{4\pi c}}
      \sqrt{\frac{T}{\varepsilon/2}} \cdot \frac{1}{\sqrt{s}}.
  \]
  If $s \in [\frac{\varepsilon}{2},t_1)$, then $\frac{1}{s} \le
  \sqrt{\frac{2}{\varepsilon}} \frac{1}{\sqrt{s}}$, and using \eqref{bound1}, we
  get:
  \[
    I \le \frac{N}{\varepsilon s} \le \frac{N^2}{\varepsilon s}\le \frac{N^2}{\varepsilon }
      \sqrt{\frac{2}{\varepsilon}} \frac{1}{\sqrt{s}}.
  \]

\end{proof}

\subsection{Increments of \texorpdfstring{$\overline{\cG}$}{normalized G}}
\label{S:green-kernel-increments}

In this section, we present a lemma about the increments of $\overline{\cG}$.

\begin{lemma}\label{increm-G}
  Let $p \ge 2$ and $\alpha \in (0,\frac{1}{2})$. For any $0 \le s < t \le T$
  and $x_1, x_2, y \in \bR$,
  \begin{equation}\label{ineq-G1}
    \|\overline{\cG}(t,x_2;s,y)-\overline{\cG}(t,x_1;s,y)\|_p \le C_{p,T}\,
      \frac{2^{-\alpha}\pi^{1/4}}{\sqrt{\cos(\pi\alpha)}}\,(t-s)^{\frac14-\frac{\alpha}{2}}
      |x_1-x_2|^{\alpha},
  \end{equation}
  and consequently,
  \begin{equation}\label{ineq-G2}
    \left\|\frac{\overline{\cG}(t,x_2;s,y)}{\overline{\cG}(t,x_2;0,0)}-
    \frac{\overline{\cG}(t,x_1;s,y)}{\overline{\cG}(t,x_1;0,0)}\right\|_p \le C_{\alpha,p,T}\, |x_1-x_2|^{\alpha}.
  \end{equation}
\end{lemma}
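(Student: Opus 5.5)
We prove \eqref{ineq-G1} through the mild equation for $\overline{\cG}$ in the normalized form, and then deduce \eqref{ineq-G2} by an algebraic splitting combined with \eqref{barG}.

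\textbf{Step 1: representation of $\overline{\cG}$.} Divide \eqref{G-eq} by $p_{t-s}(x-y)$ and apply the product identity \eqref{def-q}, shifted by $s$ in time and by $y$ in space. This gives
\[
  \overline{\cG}(t,x;s,y)=1+\int_s^t\int_{\bR} p_{\frac{(r-s)(t-r)}{t-s}}\Big(z-y-\frac{r-s}{t-s}(x-y)\Big)\overline{\cG}(r,z;s,y)\,\xi(\ud r,\ud z).
\]
The constant term cancels in the increment $\overline{\cG}(t,x_2;s,y)-\overline{\cG}(t,x_1;s,y)$. The increment is therefore a single It\^o integral, whose integrand is a difference of two heat kernels with the same variance $\tau(r)=\frac{(r-s)(t-r)}{t-s}$ and centers shifted by $\frac{r-s}{t-s}(x_2-x_1)$, multiplied by $\overline{\cG}(r,z;s,y)$.

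\textbf{Step 2: moment bound.} Apply \eqref{BDG}, pull out $\sup\|\overline{\cG}\|_p$ using \eqref{barG}, and then use Lemma \ref{lem-p1} with the time $\tau(r)$ and the shift $h_r=\frac{r-s}{t-s}|x_1-x_2|$. This yields
\[
  \|\cdot\|_p^2\le C_{p,T}\,4^{-\alpha}\pi^{-1/2}|x_1-x_2|^{2\alpha}\int_s^t \tau(r)^{-\frac12-\alpha}\Big(\frac{r-s}{t-s}\Big)^{2\alpha}\ud r .
\]
Set $r=s+(t-s)v$. The integral becomes $(t-s)^{\frac12-\alpha}\int_0^1 v^{\alpha-\frac12}(1-v)^{-\frac12-\alpha}\ud v=(t-s)^{\frac12-\alpha}B(\alpha+\tfrac12,\tfrac12-\alpha)$. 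This is finite precisely because $\alpha<\tfrac12$, and the reflection formula gives $B=\Gamma(\tfrac12+\alpha)\Gamma(\tfrac12-\alpha)=\pi/\cos(\pi\alpha)$. Taking square roots produces exactly the constant $2^{-\alpha}\pi^{1/4}/\sqrt{\cos\pi\alpha}$ and the power $(t-s)^{\frac14-\frac\alpha2}$.

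\textbf{Step 3: the ratio.} Write
\[
  \frac{a_2}{b_2}-\frac{a_1}{b_1}=\frac{a_2-a_1}{b_2}+a_1\frac{b_1-b_2}{b_1b_2}.
\]
Apply the generalized H\"older inequality, together with the positive and negative moments from \eqref{barG}. The first term is controlled by \eqref{ineq-G1} at $(s,y)$. The second term is controlled by \eqref{ineq-G1} with $(s,y)=(0,0)$. In both cases $(t-s)^{\frac14-\frac\alpha2}\le T^{\frac14-\frac\alpha2}$, so the time factor is absorbed into the constant.

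The main obstacle is justifying the normalized mild equation in Step 1, in particular the predictability and integrability of the stochastic integral near the endpoints $r=s$ and $r=t$. The kernel degenerates at both endpoints, but Step 2 already shows that the resulting $L^2$ integrand is finite when $\alpha<\tfrac12$. The endpoint case $s=0,\ y=0$ is the same computation applied to $\overline{Z}$.
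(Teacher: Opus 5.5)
Your proposal is correct and follows the paper's proof essentially step by step. You use the same normalized mild equation obtained by dividing \eqref{G-eq} by $p_{t-s}(x-y)$, the same combination of \eqref{BDG}, \eqref{barG} and Lemma \ref{lem-p1} leading to the Beta integral $B(\alpha+\frac12,\frac12-\alpha)=\pi/\cos(\pi\alpha)$, and the same algebraic splitting with generalized H\"older's inequality and \eqref{ineq-G1} at $(s,y)$ and $(0,0)$ for \eqref{ineq-G2}.
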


\begin{proof}
  We first prove \eqref{ineq-G1}. Dividing \eqref{G-eq} by $p_{t-s}(x-y)$, and
  using \eqref{def-q}, we obtain:
  \begin{align*}
    \overline{\cG}(t,x;s,y)
    &=1+\int_s^t\int_{\bR}
    \frac{p_{t-r}(x-z)p_{r-s}(z-y)}{p_{t-s}(x-y)}
    \overline{\cG}(r,z;s,y)\xi(\ud r,\ud z)\\
    &=1+\int_s^t\int_{\bR}
    q_{r-s}^{t-s,x-y}(z-y)\overline{\cG}(r,z;s,y)\xi(\ud r,\ud z).
  \end{align*}
  By inequality \eqref{BDG}, relation \eqref{barG}, and Lemma \ref{lem-p1},
  \begin{align*}
    &\|\overline{\cG}(t,x_2;s,y)-\overline{\cG}(t,x_1;s,y)\|_p^2\\
    &\quad\le C_{p,T}\int_s^t\int_{\bR}
    \left|q_{r-s}^{t-s,x_2-y}(z-y)-q_{r-s}^{t-s,x_1-y}(z-y)\right|^2 \ud z\,\ud r\\
    &\quad\le C_{p,T}\, \frac{4^{-\alpha}}{\sqrt{\pi}} \,|x_1-x_2|^{2\alpha}
    \int_s^t\left(\frac{r-s}{t-s}\right)^{2\alpha}
    \left(\frac{(r-s)(t-r)}{t-s}\right)^{-\frac12-\alpha}\ud r\\
    &\quad=C_{p,T}\,  \frac{4^{-\alpha}}{\sqrt{\pi}} \,|x_1-x_2|^{2\alpha}(t-s)^{\frac12-\alpha}
    \int_s^t(r-s)^{\alpha-\frac12}(t-r)^{-\alpha-\frac12}\ud r\\
    &\quad= C_{p,T}\, \frac{4^{-\alpha}}{\sqrt{\pi}}
      \,\Gamma\left(\alpha+\frac12\right)\Gamma\left(\frac12-\alpha\right)|x_1-x_2|^{2\alpha}(t-s)^{\frac12-\alpha}\\
    &\quad =C_{p,T}\,\frac{4^{-\alpha}\sqrt{\pi}}{\cos(\pi\alpha)}\,|x_1-x_2|^{2\alpha}(t-s)^{\frac12-\alpha}.
  \end{align*}
  The Beta integral is independent of $t-s$, and the reflection formula
  $\Gamma(z)\Gamma(1-z) = \pi/\sin(\pi z)$ with $z = \alpha+\frac12$ evaluates
  it as $\pi/\cos(\pi\alpha)$. Taking square roots, we obtain \eqref{ineq-G1}.

  For \eqref{ineq-G2}, we write
  \begin{align*}
    &\frac{\overline{\cG}(t,x_2;s,y)}{\overline{\cG}(t,x_2;0,0)}
    -\frac{\overline{\cG}(t,x_1;s,y)}{\overline{\cG}(t,x_1;0,0)}\\
    &\quad =\frac{\overline{\cG}(t,x_2;s,y)-\overline{\cG}(t,x_1;s,y)}
    {\overline{\cG}(t,x_2;0,0)}
    +\overline{\cG}(t,x_1;s,y)\,
    \frac{\overline{\cG}(t,x_1;0,0)-\overline{\cG}(t,x_2;0,0)}
    {\overline{\cG}(t,x_1;0,0)\,\overline{\cG}(t,x_2;0,0)},
  \end{align*}
  and apply generalized H\"older's inequality. Since \eqref{barG} holds for
  every $k \in \bR$, the reciprocals $\overline{\cG}(t,x_i;0,0)^{-1}$ have
  finite moments of every order. The first term is estimated by \eqref{ineq-G1},
  and the second by \eqref{ineq-G1} with $s = 0$, which contributes
  $t^{\frac14-\frac{\alpha}{2}}$. Both powers are at most
  $T^{\frac14-\frac{\alpha}{2}}$. This proves Lemma \ref{increm-G}.
\end{proof}

\subsection{Covariance of the normalized stochastic heat equation}
\label{S:normalized-SHE-covariance}
The covariance asymptotic below follows from~\cite[Remark 4.4]{gu-pu}, applied
to the identity function. The variance asymptotic is~\cite[Proposition
4.1]{CKNP22-JFA}. We include a short direct proof for completeness.

\begin{proposition}[Normalized-SHE
  covariance]\label{P:normalized-SHE-covariance}
  Recall $\rho_t^{\overline{Z}}(x)$ is defined in \eqref{E:rho}. For every fixed
  $t > 0$, the normalized solution satisfies
  \begin{align*}
    \rho_t^{\overline{Z}}(x)
    \sim \frac{t}{|x|} \quad \text{as $|x| \to \infty$} \quad \text{and} \quad
    \mathrm{Var}(\overline{S}_N(t))
    \sim 2tN\log N \quad \text{as $N \to \infty$}.
  \end{align*}
\end{proposition}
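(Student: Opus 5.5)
We need the asymptotics of $\rho_t^{\overline Z}(x)=\int_0^t m(s)\,p_{2s(t-s)/t}(sx/t)\,\ud s$ and of the variance. The plan is to get the covariance tail by Laplace-type asymptotics near $s=0$, where the Gaussian factor concentrates, and then deduce the variance by stationarity.

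\emph{Covariance tail.} By symmetry of $p$ in its spatial argument we may take $x>0$. We substitute $s=t\sigma/x$, so that $sx/t=\sigma$ and $\ud s=(t/x)\,\ud\sigma$. Then
\[
\rho_t^{\overline Z}(x)=\frac{t}{x}\int_0^{x} m\Big(\frac{t\sigma}{x}\Big)\, p_{\frac{2\sigma t}{x}\left(1-\frac{\sigma}{x}\right)}(\sigma)\,\ud\sigma .
\]
Writing $v=2\sigma t(1-\sigma/x)/x$, the kernel is $(2\pi v)^{-1/2}\exp(-\sigma^2/(2v))$. For fixed $\sigma$ and $x\to\infty$ we have $v\to0$, so this tends to zero pointwise, and dominated convergence alone gives nothing useful. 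The mass must therefore come from $\sigma$ of order $v$, that is, small $s$ of order $t/x^2$.

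It is cleaner to go back to the $s$-variable directly. There
\[
p_{2s(t-s)/t}(sx/t)=\Big(\frac{4\pi s(t-s)}{t}\Big)^{-1/2}\exp\Big(-\frac{s x^2}{4t(t-s)}\Big).
\]
This is $(4\pi s)^{-1/2}e^{-sx^2/(4t^2)}(1+O(s))$ near $s=0$. On $s\ge\delta$ the integrand is exponentially small in $x^2$, so that region is negligible. On $s\le\delta$ we use $m(s)=1+O(\sqrt s)$ and the Laplace integral
\[
\int_0^\infty (4\pi s)^{-1/2}e^{-s x^2/(4t^2)}\,\ud s=\frac{1}{\sqrt{4\pi}}\,\Gamma(1/2)\,\frac{2t}{x}=\frac{t}{x}.
\]
The correction factors $m(s)-1$ and $(t/(t-s))^{1/2}e^{O(s^2x^2)}-1$ contribute $o(t/x)$. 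We check this by splitting at $s=x^{-3/2}$: below it the corrections are $o(1)$ uniformly, and above it the exponential $e^{-sx^2/(4t^2)}\le e^{-cx^{1/2}}$ kills everything. This gives $\rho_t^{\overline Z}(x)\sim t/|x|$.

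\emph{Variance.} By stationarity,
\[
\mathrm{Var}(\overline S_N(t))=2\int_0^N (N-x)\,\rho_t^{\overline Z}(x)\,\ud x .
\]
We have $\rho_t^{\overline Z}\ge0$ and $\rho_t^{\overline Z}(x)\le m(0\vee\cdot)$, which is bounded since $m$ is bounded on $[0,t]$ and $\int p=1$ after rescaling. In fact a direct crude bound is $\rho_t^{\overline Z}(x)\le C_t\int_0^t s^{-1/2}\,\ud s$, so $\rho_t^{\overline Z}$ is bounded on $[0,1]$.

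Given $\eta>0$, choose $x_0$ with $|x\rho_t^{\overline Z}(x)-t|\le\eta t$ for $x\ge x_0$. Then
\[
2\int_{x_0}^N (N-x)\frac{t}{x}\,\ud x=2tN\log(N/x_0)-2t(N-x_0)\sim 2tN\log N,
\]
while the piece over $[0,x_0]$ is $O(N)$. Letting $\eta\to0$ gives $\mathrm{Var}(\overline S_N(t))\sim 2tN\log N$.

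The main obstacle is the uniform control of the correction terms in the Laplace step, in particular making sure that the region where $s$ is not small contributes $o(1/x)$. The exponential factor $\exp(-sx^2/(4t(t-s)))$ handles this, since it is increasing in $s$ and forces smallness once $s\gg x^{-2}$.
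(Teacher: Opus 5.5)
Your proof is correct. The variance step is the same as the paper's: stationarity, an $O(N)$ contribution from a fixed interval, and the tail sandwiched between $(1\pm\eta)t/x$.

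For the covariance tail you take a different route. You approximate the kernel near $s=0$ by $(4\pi s)^{-1/2}e^{-sx^2/(4t^2)}$ and evaluate that Laplace integral exactly as $t/x$. You then control the relative correction $m(s)\,(t/(t-s))^{1/2}\exp\bigl(-s^2x^2/(4t^2(t-s))\bigr)$ by cutting at $s=x^{-3/2}$. Below the cut it is $1+o(1)$ uniformly. Above it, the factor $e^{-x^{1/2}/(4t^2)}$ times the integrable $(s(t-s))^{-1/2}$ gives $o(1/x)$.

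The paper instead makes the exact substitutions $v=t(t-s)/s$ and $v=x^2w$, which give
\[
x\rho_t^{\overline{Z}}(x)=t\int_0^\infty \frac{x^2}{t+x^2w}\,m\Big(\frac{t^2}{t+x^2w}\Big)\,p_{2w}(1)\,\ud w .
\]
It concludes with a single dominated convergence, using the $x$-independent majorant $tM_t\,p_{2w}(1)/w$ and $\int_0^\infty p_{2w}(1)w^{-1}\,\ud w=1$. In these coordinates your Laplace integral is exactly the limiting integral, so both proofs capture the same mechanism: the mass sits at $s\asymp x^{-2}$.

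The paper's exact change of variables absorbs the $(t-s)$ corrections and the whole interval $(0,t)$ at once, so no cutoff or error bookkeeping is needed. Your version is more hands-on and makes the localization explicit. Both need only $m(0+)=1$ and $\sup_{(0,t]}m<\infty$; the rate $m(s)=1+O(\sqrt{s})$ you invoke is more than your cutoff argument uses.

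Two small repairs are needed.

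\begin{itemize}
\item The expansion ``$(1+O(s))$'' near $s=0$ is not uniform in $x$, because of the factor $e^{-s^2x^2/(4t^2(t-s))}$. Your later cutoff at $x^{-3/2}$, where $s^2x^2\le x^{-1}$, is what actually handles this, so that earlier claim should be dropped.
\item Your justification that $\rho_t^{\overline{Z}}$ is bounded on $[0,x_0]$ needs fixing. The inequality with $m(0\vee\cdot)$ is not meaningful. The crude bound $C_t\int_0^t s^{-1/2}\,\ud s$ ignores the $(t-s)^{-1/2}$ singularity of $(4\pi s(t-s)/t)^{-1/2}$ at $s=t$, which is not tamed by the exponential when $x$ is near $0$. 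Use instead $\rho_t^{\overline{Z}}(x)\le M_t\sqrt{t/(4\pi)}\int_0^t (s(t-s))^{-1/2}\,\ud s=M_t\sqrt{\pi t}/2$. Alternatively, as the paper does, use Cauchy--Schwarz: $\rho_t^{\overline{Z}}(x)\le\rho_t^{\overline{Z}}(0)=m(t)-1$.
\end{itemize}
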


\begin{proof}
  Put $m(s) = \bE|\overline{Z}(s,0)|^2$. The identity $\overline{Z}(s,x) =
  \overline{\cG}(s,x;0,0)$ and Theorem 1.3 of \cite{chen-jimenez26}, with noise
  strength one, imply $m(s) \to 1$ as $s \downarrow 0$ and $M_t:=\sup_{0 < s
  \le t}m(s) < \infty$. It\^o's isometry and the Gaussian semigroup identity
  give the first equality below. For $x > 0$, Gaussian scaling and the
  substitutions $v = t(t-s)/s$ and $v = x^2w$ give the second:
  \begin{align*}
    \rho_t^{\overline{Z}}(x)
    &=\int_0^t m(s)p_{2s(t-s)/t}(sx/t)\,\ud s,\\
    x\rho_t^{\overline{Z}}(x)
    &=t\int_0^\infty\frac{x^2}{t+x^2w}
      m\left(\frac{t^2}{t+x^2w}\right)p_{2w}(1)\,\ud w.
  \end{align*}
  The second integrand converges to $t p_{2w}(1)/w$ and is bounded by $tM_t
  p_{2w}(1)/w$. This majorant is integrable, with $\int_0^\infty
  p_{2w}(1)w^{-1}\,\ud w = 1$, as the substitution $z = 1/(2\sqrt{w})$ shows.
  Dominated convergence and evenness prove the covariance asymptotic.

  By stationarity, $\mathrm{Var}(\overline{S}_N(t)) =
  2\int_0^N(N-x)\rho_t^{\overline{Z}}(x)\,\ud x$. The covariance is nonnegative
  by its integral formula and bounded by $m(t)-1$ by Cauchy--Schwarz, so a fixed
  interval $[0, R]$ contributes $O(N)$. For $\delta \in (0, t)$, choose $R > 0$
  such that the remaining tail lies between $(t-\delta)/x$ and $(t+\delta)/x$.
  Since $\int_R^N(N-x)x^{-1}\,\ud x = N\log N+O_R(N)$, the variance divided by
  $N\log N$ has liminf at least $2(t-\delta)$ and limsup at most $2(t+\delta)$.
  Letting $\delta \downarrow 0$ proves the second assertion. 
\end{proof}

\end{document}